\newtheorem{theorem}{Theorem}[section]
\newtheorem{lemma}[theorem]{Lemma}
\newtheorem{proposition}[theorem]{Proposition}
\newtheorem{corollary}[theorem]{Corollary}
\theoremstyle{definition}
\newtheorem{definition}[theorem]{Definition}
\newtheorem{example}[theorem]{Example}
\theoremstyle{remark}
\newtheorem{remark}[theorem]{Remark}
\numberwithin{equation}{section}
\newcommand{\R}{{\mathbb R}}
\newcommand{\N}{{\mathbb N}}
\newcommand{\Lp}{{\Delta}}
\newcommand{\Deg}{{\mathrm{deg}}}
\newcommand{\supp}{{\mathrm {supp}\,}}
\newcommand{\pdt}{\frac{\partial}{\partial t}}
\newcommand{\bd}[1]{\partial \om_{#1}}
\newcommand{\inte}[1]{\mathring \om_{#1}}
\newcommand{\bbr}[1]{{\partial B({#1})}}
\newcommand{\ka}{{\kappa}}
\newcommand{\vka}{{\ow{\kappa}}}
\newcommand{\om}{{\Omega}}
\newcommand{\ep}{{\epsilon}}
\newcommand{\lm}{{\lambda}}
\newcommand{\h}{{{h}}}
\newcommand{\as}[1]{\left\langle #1\right\rangle}
\newcommand{\av}[1]{\left\vert #1\right\vert}
\newcommand{\aV}[1]{\left\Vert #1\right\Vert}
\newcommand{\ov}[1]{\overline{ #1}}
\newcommand{\ow}[1]{\widetilde{ #1}}
\newcommand{\oh}[1]{\widehat{ #1}}
\providecommand{\eat}[1]{}
\begin{document}

\title{The Feller Property for Graphs}
\author{Rados{\l}aw K. Wojciechowski}
\address{Graduate Center of the City University of New York, 365 Fifth Avenue, New York, NY, 10016.
}
\address{York College of the City University of New York, 94-20 Guy R. Brewer Blvd., Jamaica, NY 11451.}
\email{rwojciechowski@gc.cuny.edu}
\date{\today}
\thanks{The author gratefully acknowledges financial support from PSC-CUNY Awards, jointly funded by the Professional Staff Congress and the City University of New York, and the Collaboration Grant for Mathematicians, funded by the Simons Foundation.}

\begin{abstract}
The Feller property concerns the preservation of the space of functions vanishing at infinity by the semigroup generated by an operator.  We study this property in the case of the Laplacian on infinite graphs with arbitrary edge weights and vertex measures.  In particular, we give conditions for the Feller property involving curvature-type quantities for general graphs, characterize the property in the case of model graphs and give some comparison results to the model case.  
\end{abstract}

\maketitle

\section{Introduction}
Any semigroup which, among other properties, preserves the space of functions which vanish at infinity is called a Feller semigroup.  The continuity properties of such semigroups and the associated processes were studied early on by Feller \cite{Fel1, Fel2}.  For the semigroup generated by the Laplacian on a Riemannian manifold this property has been investigated extensively; we mention the works of Azencott \cite{Az}, Yau \cite{Y}, Dodziuk \cite{D}, Karp and Li \cite{KLi},  Hsu \cite{Hsu1}, and Davies \cite{Da}, amongst many others.  A Riemannian manifold for which the semigroup generated by the Laplacian is Feller is said to satisfy the \emph{Feller property} or the $C_0$-\emph{diffusion property} or is said to be simply \emph{Feller} for short.   This property can clearly be interpreted in terms of the heat flow and the Brownian motion. A survey of known result with many new conditions was given in the recent article of Pigola and Setti \cite{PS} which we follow throughout our presentation.  Our aim is to study this property for the semigroup generated by the Laplacian on a locally finite infinite graph with arbitrary edge weights and vertex measures. 

In general, it seems that there are two ways in which a space will satisfy the Feller property.  One way is via sufficient restrictions on the growth of the space which ensures that heat will not spread too far from its original starting point.  Thus, in the Riemannian setting, lower bounds on the Ricci curvature are usually used to imply the Feller property.  We mention here the result of Yau \cite{Y} which states that if the Ricci curvature is uniformly bounded from below, then the Riemannian manifold is Feller.  Dodziuk \cite{D} reproved this result using a maximum principle approach and we adapt this proof here to  show that if a curvature-type quantity is uniformly bounded from below, then the graph is Feller, see Theorem \ref{general_thm} in Section \ref{s:graph}.  This holds, in particular, if the Laplacian is a bounded operator, see Corollary \ref{general corollary}.  An important distinction should be highlighted in that, unlike similar criteria for stochastic completeness in the graph setting \cite{We, Hua}, we need to assume that the curvature-type quantity is uniformly controlled in \emph{all} directions. 

The second, and much less emphasized, way in which a space can be Feller is via rapid growth which forces heat to infinity rapidly where it dissipates.  This can already be seen in the work of Azencott \cite{Az} which characterizes the Feller property in the case of spherically symmetric or model manifolds and implies that if a model manifold is transient, then it is Feller.  The analogue to model manifolds in the graph setting was recently developed and studied in \cite{KLW}.  We prove a counterpart to Azencott's characterization following the approach given by Pigola and Setti though our result looks slightly different due to the presence of an arbitrary vertex measure, see Theorem \ref{model theorem} in Section \ref{s:graph}.  In particular, all transient model graphs are Feller.

The result on models allows us to give examples of graphs which are not Feller.  These are spherically symmetric graphs for which the boundary of balls does not grow too rapidly and for which the measure decays.  The decay of the measure accelerates the process sufficiently so that the bulk of it does not remain within a confined region, but the lack of boundary growth ensures that the heat does not dissipate at infinity either.  As such,  heat congregates at infinity and the graph is not Feller.  In general, if the measure does not decay in at least one direction, then the graph is Feller, irrespective of the edge weights, see Proposition \ref{measure proposition}.  In the case of models, this result can be strengthened to say that all model graphs of infinite measure are Feller, see Corollary \ref{model corollary}.

We also prove a comparison theorem which states that if a general graph has stronger curvature growth than a model graph which is Feller, then the graph is Feller and if a general graph has weaker curvature growth than a model graph which is not Feller, then it is not Feller, see Theorem \ref{comparison_theorem} in Section \ref{s:graph}.  We note that the comparisons are the opposite of what would be expected given similar comparison theorems for stochastic completeness and for the bottom of the spectrum \cite{KLW}.  

In this light, it is surprising that no general growth condition on our curvature-type notion implies the Feller property.  In fact, we can create examples of graphs of arbitrarily large curvature growth in all directions which are not Feller, see Example \ref{anti_example}.  However, by modifying the notion of curvature slightly, such a general result is possible, see Theorem \ref{general_thm2}.  A similar result can also be given for a stochastically complete graph to be non-Feller, see Theorem \ref{general_thm3}.  In Subsection \ref{sub_ex_stab} we give examples of graphs where these twisted curvature-type criteria can be applied in the study of the stability of the Feller property. Finally, we would like to mention a recent extension of the Feller property called the \emph{uniform strong Feller} property and the connection of this property to a strong form of transience developed in a forthcoming paper \cite{KLSW}.

This paper is structured as follows: in Section \ref{s:setting} we introduce the general setting of weighted graphs, Laplacians, and the heat semigroup and prove some maximum principles which will be used throughout.  Section \ref{s:Feller} introduces the Feller property, gives an alternative perspective via an elliptic characterization and some related criteria which hold in general.  Finally, in Section \ref{s:graph}, we prove the various criteria for the (failure of) Feller property on graphs mentioned above and give examples to illustrate our results.

\subsection*{Acknowledgments} The author expresses his gratitude to J{\'o}zef Dodziuk for his insights, support and generosity over the years.  Furthermore, the author would like to thank the University of Jena for facilitating several visits and Matthias Keller, Daniel Lenz and Marcel Schmidt for numerous helpful discussions.

\section{Setting}\label{s:setting}
In this section we introduce the setting of weighted graphs, discrete Laplacians, the associated semigroup and then prove some maximum principles which will be used throughout.

\subsection{Weighted graphs}
We work in the context of general weighted graphs as established in \cite{KL} except that we assume that all graphs are locally finite and have no killing term.  For some results, the local finiteness is not an essential requirement as we point out where applicable.

By \emph{weighted graph} $G$ we mean a triple $G=(X, b, m)$ where $X$ is a countably infinite set whose elements are called \emph{vertices}, $b$ is the \emph{edge weight}  which gives the graph structure and $m$ is a positive \emph{vertex measure}.

More specifically,
$b: X \times X \to [0, \infty)$
satisfies $b(x,y) = b(y,x)$, $b(x,x)=0$, and
$ \av{ \{ y \ | \ b(x,y) > 0 \} } < \infty \textup{ for all } x \in X.$
This last assumption is what is referred to as \emph{local finiteness}.  More generally, as in \cite{KL}, we need only assume that $\sum_{y \in X} b(x,y) < \infty$ for all $x \in X$ to get a reasonable setup.  Pairs of vertices $x$ and $y$ such that $b(x,y)>0$ are said to be \emph{connected} by an \emph{edge} with weight $b(x,y)$.  We write $x \sim y$ in this case.  Finally, $m: X \to (0, \infty)$ is called the \emph{vertex measure} and is extended to all subsets of $X$ by additivity.

Two vertices $x$ and $y$ are said to be \emph{connected} if there exists a sequence of vertices $(x_i)_{i=0}^n$ such that $x_0 = x$, $x_n = y$ and $x_i \sim x_{i+1}$ for $i=0, 1, \ldots, n-1$.  Such a sequence is called a \emph{path} connecting $x$ and $y$.  A graph $G$ is said to be \emph{connected} if every pair of vertices is connected.  
\[ \textup{We assume throughout that $G$ is connected.} \]

We will work with the standard combinatorial metric on graphs which is given by counting the number of edges in the shortest path connecting two vertices.  We denote this metric by $d$.  That is, denoting by $\Gamma_{x,y}$ the set of all paths connecting $x$ and $y$ and, for $\gamma = (x_i)_{i=0}^n \in \Gamma_{x,y}$, letting $l(\gamma) = n$ be the \emph{length} of a path $\gamma$, we have
\[ d(x,y) = \inf_{\gamma \in \Gamma_{x,y}} l(\gamma). \]


\subsection{Function spaces and Laplacians}
We denote by $C(X)$ the space of all real-valued functions on $X$, that is, $C(X) = \{ f: X \to \R \}.$  Two important subspaces are the space of finitely supported functions and the closure of this space with respect to the sup norm, which consists of functions vanishing at infinity:
\begin{align*}
C_c(X) &= \{ f \in C(X) \ | \ \av{ \supp{f}} < \infty \} \\
C_0(X) &= \{ f \in C(X) \ | \ f(x_n) \to 0 \textup{ as } x_n \to \infty \} = \ov{C_c(X)}^{\aV{\cdot}_\infty} .
\end{align*}
Here, $x_n \to \infty$ means that the sequence eventually leaves every finite set, never to return, and $\aV{f}_\infty= \sup_{x \in X} \av{f(x)}.$  We denote the space of all bounded functions by $\ell^\infty(X)$.

To introduce the Laplacian, we need to specify the Hilbert space on which this operator will act.  We denote the square-summable functions with respect to $m$ by $\ell^2(X,m)$:
\[ \ell^2(X,m) = \{ f \in C(X) \ | \ \sum_{x \in X} f(x)^2 m(x) < \infty \} \]
with inner product given by
\[ \as{f, g} = \sum_{x \in X} f(x) g(x) m(x) \]
and $\aV{\cdot}$ the associated norm.
The \emph{formal Laplacian} $\ow{\Lp}$ acts on the space $C(X)$ by
\[ \ow{\Lp} f(x) = \frac{1}{m(x)} \sum_{y \in X} b(x,y) (f(x) - f(y) ). \]
Restricting $\ow{\Lp}$ to $C_c(X)$ gives a symmetric operator $\Lp_0 = \ow{\Lp}|_{C_c(X)}$ and we denote the Friedrichs extension of $\Lp_0$ by $\Lp$.  This self-adjoint extension comes from the closure of the quadratic form $\ow{Q}(f,g) = \frac{1}{2} \sum_{x,y \in X} b(x,y) (f(x)-f(y))(g(x)-g(y))$ acting on $C_c(X)$.  Note that, in general, there may be many self-adjoint extensions of $\Lp_0$.  For more details on this issue see \cite{HKMW} and the references therein.

We also point out that letting
\[ \Deg(x) = \frac{1}{m(x)}\sum_{y \in X} b(x,y) \]
denote the \emph{weighted degree} of a vertex $x$, it follows by \cite[Theorem 11]{KL2} that $\Lp$ is bounded as an operator on $\ell^2(V,m)$ if and only if $\deg \in \ell^\infty(X)$.


\subsection{Heat semigroup and heat kernel}
We will consider the \emph{heat semigroup} $P_t = e^{-t\Lp}$ which can be extended to act on $\ell^\infty(X)$ and the associated \emph{heat kernel} $p_t(x,y)$, for $t\geq0$ and $x,y \in X$, given by
\[ P_t f(x) =  e^{-t\Lp} f(x) = \sum_{y \in X} p_t(x,y) f(y) m(y) \]
for any $f \in \ell^\infty(X)$.  Given any bounded function $u_0$, it follows that $u(x,t) = P_t u_0(x)$ is the minimal bounded solution to the \emph{heat equation} with initial condition $u_0$:
 \[
\left\{
\begin{array}{rll}
\left( \ow{\Lp} + \pdt \right) u(x,t)  &= 0 &    x \in X, \ t\geq0 \\
 u(x,0)    &= u_0(x) &      x \in X.
\end{array}
\right.
\]
Minimality here means that if $u_0 \geq 0$, then $u(x,t)=P_t u_0(x)$ is the smallest non-negative solution to the heat equation with initial condition $u_0$.

A graph is called \emph{stochastically complete} if 
\[ \sum_{y \in X} p_t(x,y) m(y) = 1 \]
for all $x \in X$ and all $t\geq0$.  This is equivalent to the uniqueness of bounded solutions to the heat equation above, see \cite[Theorem 1]{KL}.

\eat{
There is a way of constructing the heat kernel corresponding to the minimal Laplacian via exhaustion sequences which goes back to the work of Dodziuk on manifolds \cite{D}, see \cite{W08, W09, We, KL} for details in the case of graphs.  Namely, take a sequence $(\om_n)_{n=0}^\infty$ of finite, connected subgraphs of $G$ such that $\om_n \subseteq \om_{n+1}$ and $\cup_{n=0}^\infty \om_n = X$.  Any such sequence is called an \emph{exhaustion sequence} of $G$.  Now, for each $n$, let
\[ \bd{n} = \{ x \in \om_n \ | \ \exists y \sim x \textup{ such that } y \not \in \om_n \} \]
denote the \emph{vertex boundary} of $\om_n$ and let $\inte{n} = \om_n \setminus \bd{n}$
denote the \emph{interior} of $\om_n$.

Furthermore, consider the \emph{Dirichlet Laplacian} $\Lp_n$ on $\om_n$ which is defined by
\[ \Lp_n f(x) = \left\{
\begin{array}{cl}
\Lp f(x) & \textup{ for } x \in \inte{n} \\
0 & \textup{ for } x \in \bd{n}
\end{array}
\right. \]
and the corresponding \emph{Dirichlet heat kernels} $p_t^n(x,y)$ given by
\[ P_t^n f(x) = e^{-t \Lp_n} f(x) = \sum_{y \in \inte{n}} p_t^n(x,y) f(y). \]
It follows by standard maximum principle arguments, see Proposition \ref{p maximum principle} directly below, that $p_t^{n+1}(x,y) \geq p_t^n(x,y)$ and $\lim_{n \to \infty} p_t^n(x,y) = p_t(x,y).$
\eat{The technique of exhausting and taking the limit and the fact that $P_t^n f(x) \to P_t f(x)$ for $f \in \ell^\infty(X)$ will be used repeatedly below.}
}

\subsection{Maximum principles}
We present here some basic maximum principles in both elliptic and parabolic forms.  These are certainly well-known in our setting, see, for example, \cite{D2, DM, Hua4, KL, KLW, We, W08, W09}.  
As such, we omit the proofs.

For a finite, connected subgraph $\om$ of $G$, let
\[ \bd{} = \{ x \in \om \ | \ \exists y \sim x \textup{ such that } y \not \in \om \} \]
denote the \emph{vertex boundary} of $\om$ and let $\inte{} = \om \setminus \bd{}$
denote the \emph{interior} of $\om$.

\begin{proposition}\label{e maximum principle}
Let $\om$ and  $\om_1$ be finite, connected subgraphs of $G$ such that $\om \subseteq \inte{1}$.  If $\lm<0$ and $v \in C(\om_1)$ satisfies
\[ \left\{
\begin{array}{ll}
\Lp v = \lm v & \textup{on } \inte{1}\setminus \om \\
v = 1 & \textup{on } \bd{},
\end{array} \right. \]
then $0<v<1$ on $\inte{1} \setminus \om.$
\end{proposition}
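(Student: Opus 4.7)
The plan is to apply discrete elliptic maximum and minimum principles to the eigenfunction-type equation $\Lp v = \lm v$ on the finite annular region $\inte{1} \setminus \om$. The arithmetic observation I will use repeatedly is that at any interior vertex $x_0 \in \inte{1} \setminus \om$,
\[ \Lp v(x_0) = \frac{1}{m(x_0)} \sum_y b(x_0,y)\bigl(v(x_0) - v(y)\bigr), \]
so if $x_0$ is an interior maximum then $\Lp v(x_0) \geq 0$, whereas if it is an interior minimum then $\Lp v(x_0) \leq 0$. Combined with the equation $\Lp v(x_0) = \lm v(x_0)$ and the assumption $\lm < 0$, this forces $v(x_0) \leq 0$ in the first case and $v(x_0) \geq 0$ in the second.

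For the upper bound, suppose the maximum $M$ of $v$ over the closure of the annular region were attained at an interior vertex $x_0$; the computation above yields $M = v(x_0) \leq 0$, contradicting the boundary value $v = 1$ on $\bd{}$. Hence $M$ is attained on the boundary, so $M \leq 1$. To promote this to the strict inequality $v < 1$, I would argue that if $v(x_0) = 1$ at an interior $x_0$, then $x_0$ realizes the maximum and $\Lp v(x_0) \geq 0$ clashes with $\lm v(x_0) = \lm < 0$. Symmetrically, at an interior minimum the estimate gives $v \geq 0$ throughout; for strict positivity I would invoke a discrete strong minimum principle: if $v(x_0) = 0$ at an interior minimum, then $\Lp v(x_0) = 0$, and since each summand $b(x_0,y)(v(x_0) - v(y)) \leq 0$ sums to zero, every summand must vanish, forcing $v(y) = 0$ at each neighbor $y$. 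Iterating along a path in the connected annular region eventually reaches a boundary vertex on $\bd{}$, contradicting $v = 1$ there.

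The only delicate bookkeeping concerns the interpretation of the boundary data, since the equation on $\inte{1} \setminus \om$ couples to values on $\bd{1}$. The cleanest reading---matching the Dirichlet Laplacian $\Lp_1$ on $\om_1$ introduced in the preceding subsection---is that $v$ also satisfies $v = 0$ on $\bd{1}$, making $\Lp v = \lm v$ a well-posed finite linear system with data prescribed on $\bd{} \cup \bd{1}$. Under this interpretation the arguments above apply verbatim, and the only nontrivial step, the propagation of the value $0$ along a path to $\bd{}$ inside the strong minimum principle, is handled by local finiteness together with connectedness of the subgraph.
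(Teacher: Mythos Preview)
Your proof is correct and follows essentially the same maximum/minimum principle argument as the paper, though the paper proceeds directly to the strict inequalities (assuming $v(x)\leq 0$, respectively $v(x)\geq 1$, at an extremal interior point) rather than first establishing the weak ones. Your explicit discussion of the implicit Dirichlet condition $v=0$ on $\bd{1}$ is a worthwhile clarification that the paper leaves unstated but needs for the claim $\Lp v(x)\leq 0$ at the interior minimum.
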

\eat{\begin{proof}
Suppose that there exists $x \in \inte{1} \setminus \om$ such that $v(x) \leq 0$.  We may assume that $x$ is a minimum for $v$ on $\inte{1} \setminus \om$.  It then follows that $\Lp v(x) \leq 0$.  On the other hand, $\Lp v(x) = \lm v(x) \geq0$ so that $\Lp v(x) =0$.    It then follows that $v(x) = v(y)$ for all neighbors $y \sim x$.  Repeating this argument gives a contradiction to the fact that $v=1$ on $\bd{}$.

If there exists $x \in \inte{1} \setminus \om$ such that $v(x)\geq1$, then we may assume that $x$ is a maximum for $v$ on $\inte{1} \setminus \om$.  Hence, $\Lp v(x) \geq 0$ while $\Lp v(x) = \lm v(x) < 0$ which gives a contradiction.
\end{proof}
}

\begin{proposition}\label{p maximum principle}
Let $\om$ be a finite, connected subgraph of $G$.  If $u:\om \times [0,T] \to \R$ satisfies
\[ \left( \Lp + \pdt \right) u(x,t) \leq 0 \textup{ on } \inte{} \times [0,T], \]
then 
\[ \max_{\om \times [0,T]} u(x,t) = \max_{\substack{ \om \times \{0\} \cup \\ \bd{} \times [0,T]}} u(x,t). \] 
\end{proposition}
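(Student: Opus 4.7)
The plan is to follow the classical perturbation argument: introduce a small linear-in-time correction so that the inequality becomes strict, then show that a strict inequality prevents the maximum of the perturbed function from being attained at any point of $\inte{} \times (0,T]$, which is the parabolic interior.

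Concretely, fix $\ep > 0$ and set $u_\ep(x,t) = u(x,t) - \ep t$. Then on $\inte{} \times [0,T]$ one has
\[ \left(\Lp + \pdt\right) u_\ep(x,t) = \left(\Lp + \pdt\right) u(x,t) - \ep \leq -\ep < 0. \]
Since $\om$ is finite and $[0,T]$ is compact, $u_\ep$ attains its maximum on $\om \times [0,T]$ at some point $(x_0,t_0)$. Suppose for contradiction that $(x_0,t_0)$ lies in the parabolic interior, i.e.\ $x_0 \in \inte{}$ and $t_0 > 0$. Because $(x_0,t_0)$ is a maximum in the time variable and $t_0 > 0$, we have $\pdt u_\ep(x_0,t_0) \geq 0$ (interpreted as a left derivative at $t_0 = T$). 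Because $x_0 \in \inte{}$, every neighbor $y \sim x_0$ lies in $\om$ and so $u_\ep(x_0,t_0) \geq u_\ep(y,t_0)$ for all such $y$; from the definition of $\Lp$ this gives
\[ \Lp u_\ep(x_0, t_0) = \frac{1}{m(x_0)}\sum_{y} b(x_0,y)\bigl(u_\ep(x_0,t_0) - u_\ep(y,t_0)\bigr) \geq 0. \]
Adding these two inequalities contradicts the strict negativity displayed above.

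Thus the maximum of $u_\ep$ on $\om \times [0,T]$ must be attained on the parabolic boundary $\om \times \{0\} \cup \bd{} \times [0,T]$, which in particular yields
\[ u(x,t) - \ep t \leq \max_{\substack{\om \times \{0\} \cup \\ \bd{} \times [0,T]}} u \qquad \text{for all } (x,t) \in \om \times [0,T]. \]
Letting $\ep \to 0^+$ gives the desired inequality; the reverse inequality is trivial since the parabolic boundary is a subset of $\om \times [0,T]$.

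The argument is routine, and no single step is a genuine obstacle; the only things to keep track of are that local finiteness is not needed here because $\om$ itself is finite, that the maximum principle for $\Lp$ at an interior spatial maximum uses only $x_0 \in \inte{}$ (so that all neighbors lie in $\om$, where $u$ is controlled), and that the time-derivative inequality at $t_0 = T$ is a one-sided statement, which is exactly what is needed.
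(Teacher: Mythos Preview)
Your proof is correct and takes a genuinely different route from the paper's. The paper argues directly with $u$: if the maximum were attained at $(x,t) \in \inte{} \times (0,T]$, then $\Lp u(x,t) \geq 0$ and $\pdt u(x,t) \geq 0$, so the differential inequality forces $\Lp u(x,t) = 0$, hence $u(y,t) = u(x,t)$ for every neighbor $y$; iterating and using the connectedness of $\om$ propagates the maximum out to $\bd{}$. Your perturbation $u_\ep = u - \ep t$ converts the weak inequality into a strict one, which rules out an interior maximum in a single step and lets you dispense with the propagation argument entirely. In particular, your proof never uses the connectedness hypothesis on $\om$, so it actually yields a slightly stronger statement; the paper's argument, by contrast, exploits the discrete strong maximum principle (equality in $\Lp u \geq 0$ at a maximum forces equality at neighbors) and needs connectivity to reach the boundary.
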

\eat{\begin{proof}
If $(x,t) \in \inte{}\times (0,T]$ is a maximum for $u$, then
\[ \Lp u(x,t) \geq 0 \textup{ and } \pdt u(x,t) \geq 0 \]
implying that $\Lp u(x,t)=0$.  Then $u(x,t) = u(y,t)$ for all $y \sim x$.  Repeating the argument and using the connectivity of $\om$ then implies that $u(\cdot, t)$ is constant.
\end{proof}
}


\section{The Feller Property}\label{s:Feller}
In this section we first define then give an elliptic characterization and some Khas{\cprime}minski{\u\i}-type criteria for the Feller property.  The proofs are formally the same as in the manifold setting in \cite{Az, PS}.  Thus, we omit the proofs below and rather point out the corresponding places in \cite{Az, PS}.  

\subsection{Definition}
We start with the definition of the Feller property and a simplification which will be used later. 

\begin{definition}
A weighted graph $G$ is said to be \emph{Feller} if
\[ P_t: C_0(X) \to C_0(X) \textup{ for all } t \geq 0. \]
\end{definition}

That is, $G$ is Feller if the semigroup arising from the Laplacian preserves the functions which vanish at infinity for every fixed time $t$. A simplification of this states that it suffices to check that non-negative, finitely supported functions are mapped to functions vanishing at infinity, compare \cite[Lemma 1.2]{PS}.

\begin{proposition}\label{finite support}
If $P_t u_0 \in C_0(X)$ for all $u_0 \geq 0$ with $u_0 \in C_c(X)$, then $G$ is Feller.
\end{proposition}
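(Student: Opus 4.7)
The plan is to prove this in two reduction steps: first reduce from general $u_0 \in C_c(X)$ to nonnegative ones, and then from $C_0(X)$ to $C_c(X)$ by density.

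First I would observe that $P_t$ is linear on $\ell^\infty(X)$, so for an arbitrary $u_0 \in C_c(X)$ we may write $u_0 = u_0^+ - u_0^-$ where $u_0^\pm = \max(\pm u_0, 0)$. Both $u_0^\pm$ are nonnegative and finitely supported, hence the hypothesis yields $P_t u_0^\pm \in C_0(X)$, and by linearity $P_t u_0 = P_t u_0^+ - P_t u_0^- \in C_0(X)$ as well, since $C_0(X)$ is a vector space.

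Next, for an arbitrary $f \in C_0(X) = \overline{C_c(X)}^{\|\cdot\|_\infty}$, I would pick a sequence $f_n \in C_c(X)$ with $\|f_n - f\|_\infty \to 0$. The key ingredient needed is that $P_t$ acts as a contraction on $\ell^\infty(X)$, i.e.\ $\|P_t g\|_\infty \leq \|g\|_\infty$ for all $g \in \ell^\infty(X)$. This follows from the exhaustion construction: each Dirichlet heat kernel $p_t^n(x,y)$ is nonnegative (Proposition \ref{p maximum principle} applied to $-p_t^n(x,\cdot)$) and satisfies $\sum_{y} p_t^n(x,y) m(y) \leq 1$ (by applying Proposition \ref{p maximum principle} to $u(x,t) = P_t^n \mathbf{1}(x) - 1$, which has nonpositive initial and boundary data), and these inequalities pass to the limit. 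Hence
\[ \|P_t f_n - P_t f\|_\infty \leq \|f_n - f\|_\infty \to 0. \]
Since $P_t f_n \in C_0(X)$ by the previous step and $C_0(X)$ is closed in the sup norm, the uniform limit $P_t f$ lies in $C_0(X)$, establishing the Feller property.

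The routine bookkeeping occupies most of the argument; the only nontrivial point is justifying the contraction of $P_t$ on $\ell^\infty(X)$, which is where the exhaustion procedure and the parabolic maximum principle (Proposition \ref{p maximum principle}) carry the weight. I would expect this step to be the main obstacle in the sense that it relies on the precise construction of $P_t$ on bounded functions rather than its $\ell^2$ definition; everything else is formal.
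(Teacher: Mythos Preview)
Your proof is correct and follows essentially the same approach as the paper's: split into positive and negative parts, then pass from $C_c(X)$ to $C_0(X)$ by density using the sup-norm continuity of $P_t$. The paper's proof is a one-line sketch that takes the continuity of $P_t$ on $(C_c(X),\|\cdot\|_\infty)$ for granted, whereas you supply the justification via the exhaustion and the parabolic maximum principle; this extra detail is appropriate and the argument is sound.
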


\eat{
\begin{proof}
Split an arbitrary $u \in C_0(X)$ into positive and negative parts, then use the linearity of $P_t$ and the fact that $P_t$ is continuous on $C_c(X)$ with respect to $\aV{\cdot}_\infty$.

\end{proof}
}


\subsection{An elliptic reformulation}
We now present a reformulation of the Feller property involving functions which are $\lambda$-\emph{harmonic} outside of a finite set, that is, satisfying $\ow{\Lp} f = \lambda f$ for some constant $\lambda$ outside of a finite set.  The original result is by Azencott \cite{Az}; a proof is also given in Pigola and Setti \cite[Theorem 2.2]{PS}.

\begin{theorem} \label{Az theorem}
The following statements are equivalent:
\begin{itemize}
\item[(1)] $G$ is Feller.
\item[(2)]  For some (any) $\om \subset X$ finite, for some (any) $\lambda<0$, $h: X \setminus \om \to (0, \infty)$ the minimal solution to
\begin{equation}\label{minimal solution}
\left \{
\begin{array}{ll}
\ow{\Lp} h = \lambda h & \textup{on } X \setminus \om \\
h =1 & \textup{on } \partial \om \\
h>0 & \textup{on } X \setminus \om
\end{array} \right.
\end{equation}
is in $C_0(X)$.
\end{itemize}
\end{theorem}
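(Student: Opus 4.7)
My plan is to prove both directions using the Dirichlet exhaustion $(\om_n)$ with $\om \subset \inte{n}$ together with the maximum principles of Section \ref{s:setting}; in both directions the bridge between $h$ and the semigroup $P_t$ is a suitable heat-kernel integral.

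For $(1) \Rightarrow (2)$, I first construct $h$ as the monotone limit of Dirichlet approximants $h_n$ solving $\LF h_n = \lm h_n$ on $\inte{n} \setminus \om$, $h_n = 1$ on $\partial \om$, and $h_n = 0$ on $\bd{n}$. Proposition \ref{e maximum principle} applied in turn to $h_n$ and to $h_{n+1} - h_n$ yields $0 < h_n < 1$ and $h_n \leq h_{n+1}$, so $h := \lim h_n$ solves the boundary value problem and is minimal (any positive $\lm$-harmonic function on $X \setminus \om$ with value $\geq 1$ on $\partial \om$ dominates every $h_n$ by the same principle). To place $h$ in $C_0(X)$, consider the resolvent
\[ G_\lm \mathbf{1}_\om(x) := \int_0^\infty e^{\lm t} P_t \mathbf{1}_\om(x) \, dt. \]
An integration-by-parts in $t$ (using $\partial_t P_t f = -\LF P_t f$ and $e^{\lm t} P_t \mathbf{1}_\om \to 0$ as $t \to \infty$) yields $(\LF - \lm) G_\lm \mathbf{1}_\om = \mathbf{1}_\om$, so $G_\lm \mathbf{1}_\om$ is $\lm$-harmonic on $X \setminus \om$ and strictly positive by positivity of $p_t$ on the connected graph. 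The Feller hypothesis gives $P_t \mathbf{1}_\om \in C_0$ for every $t \geq 0$, and dominated convergence with majorant $e^{\lm t}$ places $G_\lm \mathbf{1}_\om$ itself in $C_0(X)$. Setting $c := 1/\min_{\partial \om} G_\lm \mathbf{1}_\om > 0$, the function $c\,G_\lm \mathbf{1}_\om$ is a positive $\lm$-harmonic function with values $\geq 1$ on $\partial \om$, so the minimality established above gives $h \leq c\,G_\lm \mathbf{1}_\om \in C_0(X)$, hence $h \in C_0(X)$.

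For $(2) \Rightarrow (1)$, by Proposition \ref{finite support} it suffices to show $P_t u_0 \in C_0$ for $u_0 \geq 0$ in $C_c(X)$. Enlarging $\om$ if necessary so that $\supp u_0 \subset \om$ (the hypothesis persists under such enlargement by the monotonicity of the minimal positive solution in $\om$), I aim for
\[ P_t u_0(x) \leq \aV{u_0}_\infty \, e^{-\lm t} h(x), \qquad x \in X \setminus \om, \ t \in [0,T]. \]
The key observation is that $e^{-\lm t} h$ is a space-time heat solution on $X \setminus \om$:
\[ \left(\LF + \pdt\right)\bigl(e^{-\lm t} h\bigr) = e^{-\lm t}\bigl(\LF h - \lm h\bigr) = 0. \]
Extending $h$ by $1$ on $\om$ and setting $\phi_n := \aV{u_0}_\infty e^{-\lm t} h - P_t^n u_0$ on $\om_n \times [0,T]$, one checks $\phi_n \geq 0$ on the parabolic boundary (at $t = 0$ using $u_0 \leq \aV{u_0}_\infty \mathbf{1}_\om \leq \aV{u_0}_\infty h$, and on $\bd{n} \times [0,T]$ using $P_t^n u_0 = 0$ there), while $(\LF + \pdt) \phi_n \geq 0$ on $\inte{n}$ (the only nonzero contribution of $\LF h$ is on $\om$, where it is non-negative). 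Proposition \ref{p maximum principle} applied to $-\phi_n$ then gives $\phi_n \geq 0$ on $\om_n \times [0,T]$, and letting $n \to \infty$ via $P_t^n u_0 \nearrow P_t u_0$ yields the estimate. Since $h \in C_0$, this forces $P_t u_0(x) \to 0$ as $x \to \infty$.

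The main technical obstacle is lifting the finite comparison principles of Section \ref{s:setting} to the infinite setting; I handle this uniformly by first working on $\om_n$, where Propositions \ref{e maximum principle} and \ref{p maximum principle} apply directly, and then passing to the limit via monotone convergence ($h_n \nearrow h$ and $P_t^n u_0 \nearrow P_t u_0$). The equivalences ``for some $\om, \lm$'' $\iff$ ``for any $\om, \lm$'' follow from the same type of maximum-principle comparison: the minimal positive solution is monotone both in $\om$ and in $\lm$, so the condition $h \in C_0$ is independent of the admissible choice of data.
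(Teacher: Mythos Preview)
Your proof is correct and follows essentially the same route as the paper: a resolvent-type integral $\int_0^\infty e^{\lambda t}P_t u_0\,dt$ dominates $h$ for $(1)\Rightarrow(2)$, and a parabolic comparison with $Ce^{-\lambda t}h$ controls $P_t u_0$ for $(2)\Rightarrow(1)$. The only cosmetic difference is in $(2)\Rightarrow(1)$: the paper compares on the annulus $\om_n\setminus\inte{1}$ with $C$ chosen so that $Ch\geq u$ on $\bd{1}\times[0,T]$, whereas you first enlarge $\om$ to contain $\supp u_0$, extend $h$ by $1$ on $\om$, and compare on all of $\om_n$ with $C=\aV{u_0}_\infty$; both yield the same estimate, and your enlargement step is legitimate since (as you note) $h_{\om'}\leq c^{-1}h_\om$ with $c=\min_{\partial\om'}h_\om>0$ whenever $\om\subset\om'$.
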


\begin{definition}
We call $h$ the \emph{minimal, positive solution to the} $\lambda$\emph{-harmonic exterior boundary problem} or just the \emph{minimal solution to} (\ref{minimal solution}).
\end{definition}

The minimal solution $h$ is constructed by an exhaustion procedure as follows. 
Take a sequence $(\om_n)_{n=0}^\infty$ of finite, connected subgraphs of $G$ such that $\om_n \subseteq \om_{n+1}$ and $\cup_{n=0}^\infty \om_n = X$.  
Let $h_n$ be functions satisfying $\Lp h_n = \lambda h_n$ on $\inte{n}\setminus \om$, 
$h_n =1$ on $\partial \om$ and $h_n = 0$ on $\bd{n}$.  By Proposition \ref{e maximum principle}, it follows that $0 < h_n < 1$ on $\inte{n} \setminus \om$ and $h_n \leq h_{n+1}$.   Finally, it can be shown that $h = \lim_{n \to \infty} h_n$ has the desired properties.

\eat{The proof of the theorem is formally the same as in \cite{PS}.  We sketch the details for the convenience of the reader.  We start by showing that $h$ can be constructed by an exhaustion sequence procedure as discussed for the heat kernel above.

\begin{proposition} \label{fundamental convergence prop}
Let $(\om_n)_{n=1}^\infty$ be any exhaustion sequence of $G$ such that $\om \subseteq \inte{1}.$  For each $n$, let $h_n$ satisfy
\[
\left \{
\begin{array}{ll}
\Lp h_n = \lambda h_n & \textup{on } \inte{n}\setminus \om \\
h_n =1 & \textup{on } \partial \om \\
h_n = 0 & \textup{on } \bd{n}.
\end{array}
\right.
\]
Then, the minimal, positive solution to (\ref{minimal solution}) satisfies
$h = \lim_{n \to \infty} h_n.$
\end{proposition}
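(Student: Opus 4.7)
The plan is to build $h$ as the monotone pointwise limit of the finite Dirichlet-type problems, mirroring the standard exhaustion construction used for the heat kernel. For each $n$, the function $h_n$ is the unique solution of the indicated boundary value problem on the finite set $\om_n \setminus \om$: the operator $\Lp - \lambda$ restricted to functions vanishing on $\bd{n}$ and prescribed on $\bd{}$ is invertible because $\lambda<0$ makes the associated form strictly positive. A minimum principle argument essentially identical to Proposition \ref{e maximum principle} then gives $0 < h_n < 1$ on $\inte{n} \setminus \om$; I would extend each $h_n$ by zero on $X \setminus \om_n$ so that all the $h_n$ live on the common set $X \setminus \om$.

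The central step is monotonicity. Set $w = h_{n+1} - h_n$ on $\om_n \setminus \om$. Then $\Lp w = \lambda w$ on $\inte{n} \setminus \om$, $w = 0$ on $\bd{}$, and $w \geq 0$ on $\bd{n}$, since $h_n$ vanishes there while $h_{n+1} \geq 0$. If $w$ attained a negative minimum at an interior point $x$, one would have $\Lp w(x) \leq 0$ while $\Lp w(x) = \lambda w(x) > 0$, a contradiction. Hence $h_n \leq h_{n+1}$ on $\om_n \setminus \om$ and, recalling the extension by zero, on all of $X \setminus \om$. The sequence therefore converges pointwise to a function $h_\infty$ with $0 \leq h_\infty \leq 1$ and $h_\infty = 1$ on $\bd{}$.

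To identify the limit, fix any $x \in X \setminus \om$ and choose $n_0$ large enough that $x$ together with every neighbor of $x$ lies in $\inte{n_0}$; local finiteness makes this a finite condition. For $n \geq n_0$, the relation $\Lp h_n(x) = \lambda h_n(x)$ is a finite sum that passes to the limit, yielding $\ow{\Lp} h_\infty(x) = \lambda h_\infty(x)$. Positivity follows at once: $h_{n_0}(x) > 0$ by the minimum principle, and the monotonicity gives $h_\infty(x) \geq h_{n_0}(x) > 0$.

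Finally, for minimality, let $v$ be any positive solution of (\ref{minimal solution}). On $\om_n \setminus \om$, the function $v - h_n$ is $\lambda$-harmonic, vanishes on $\bd{}$, and is strictly positive on $\bd{n}$ (since $h_n = 0$ there and $v > 0$). The same minimum-principle argument forces $v \geq h_n$, and letting $n \to \infty$ gives $v \geq h_\infty$. Thus $h := h_\infty$ is the minimal positive solution of (\ref{minimal solution}), and $h_n \to h$ as claimed. I expect the only delicate step to be the interchange of the pointwise limit with the sum defining $\ow{\Lp}$; this is precisely where local finiteness of $G$ enters, and without it one would need a dominated-convergence-type argument to justify passing the limit through the infinite sum $\sum_y b(x,y) h_n(y)$.
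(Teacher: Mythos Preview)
Your argument is correct and follows essentially the same line as the paper's proof: invoke Proposition~\ref{e maximum principle} for the bounds $0<h_n<1$, use the same minimum-principle argument for monotonicity $h_n\leq h_{n+1}$, pass to the limit, and then compare with an arbitrary positive solution to obtain minimality. The only cosmetic difference is that you justify $\ow{\Lp}h_\infty=\lambda h_\infty$ by local finiteness (finite sums), whereas the paper phrases the same step as an application of the dominated convergence theorem; in the locally finite setting these are the same observation.
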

\begin{proof}
By Proposition \ref{e maximum principle}, it follows that $0 < h_n < 1$ on $\inte{n} \setminus \om$ and a similar argument as given in the proof implies that $h_n \leq h_{n+1}$.  Hence, the convergence of the sequence $(h_n(x))_n$ for each $x$ follows.  Using the dominated convergence theorem, it is not hard to see that the limiting function is $\lambda$-harmonic.  Finally, using the maximum principle again, the limiting function must be positive and less than or equal to any other solution to (\ref{minimal solution}) by comparing with $h_n$.
\end{proof}
}

\begin{remark}
Even in the Feller case, there may be bounded solutions to (\ref{minimal solution}) which are not in $C_0(X)$, see Remark \ref{uniqueness remark}.  However, in the stochastically complete case, $h$ is the unique bounded solution to (\ref{minimal solution}), see Corollary \ref{uniqueness corollary} below.
\end{remark}

\eat{
\begin{proof}[Proof of Theorem \ref{Az theorem}] \

\noindent $(1) \Longrightarrow (2)$:  \quad  Let $u_0 \geq 0, u_0 \in C_c(X)$ and $u(x,t) = P_t u_0(x)$ which is in  $C_0(X)$ by assumption. Consider the function
\[ v(x) = \int_0^\infty u(x,t) e^{\lambda t} dt. \]
As $u$ is bounded, it follows that $v<\infty.$  Since the semigroup is positivity improving \cite[Theorem 7.3]{HKLW}, $v$ is positive and $v \in C_0(X)$ by the dominated convergence theorem.

Furthermore, using integration by parts, it follows that
\[ \ow{\Lp} v(x) = u_0(x) + \lambda v(x) \geq \lambda v(x). \]
Letting $C = \min_{x \in \partial \om} v(x)$ and $ w(x) = v(x)/C$, we get that $w>0, w \geq 1$ on $\partial \om, \Lp w \geq \lambda w$ and $w \in C_0(X)$.

Using a maximum principle argument as in Proposition \ref{e maximum principle}, it follows that $w > h_n$ on $\inte{n} \setminus \om$ for all $n$ and taking the limit we get that $w \geq h$.  Therefore, $h \in C_0(X)$.

\bigskip
\noindent $(2) \Longrightarrow (1)$: \quad  Let $u_0 \geq 0$ with $\supp u_0 = D$ where $D$ is finite and let $u(x,t) = P_t u_0(x)$.  By Proposition \ref{finite support}, it suffices to show that $u(\cdot, t) \in C_0(X)$ for evert $t\geq0$.  Let $h$ be the minimal solution to (\ref{minimal solution}) for some $\om$ finite and $\lambda<0$.   Choose an exhaustion $(\om_n)_{n=1}^\infty$ such that $D \cup \om \subseteq \inte{1}.$

Consider $u_n(x,t) = P_t^n u_0(x)$ where $P_t^n$ is the Dirichlet heat semigroup on $\om_n$ so that $u_n(x,t)$ is an increasing sequence such that $u_n(x,t) \to u(x,t) $ as $n \to \infty$.  Now, let $C>0$ be such that $Ch(x) \geq u(x,t)$ on $\bd{1} \times [0,T]$ and compare 
\[ u_n(x,t) \textup{ and }  w(x,t) = Ch(x) e^{-\lambda t}\]
on $\om_n \setminus \inte{1} \times [0,T].$

An easy calculation gives that $( \ow{\Lp} + \pdt ) w = 0.$
Then, on $\om_n \setminus \inte{1} \times \{0\}, u_n(x,0) = u_0(x) = 0$ while $w>0$.  Now, $\partial( \om_n \setminus \inte{1}) \subseteq \bd{n} \cup \bd{1}$ and on $\bd{n} \times [0,T], u_n(x,t) = 0$ by Dirichlet boundary conditions, while on $\bd{1} \times [0,T]$, $u_n(x,t) \leq u(x,t) \leq w(x,t)$ by the choice of the constant $C$.  Therefore, by the parabolic maximum principle, Proposition \ref{p maximum principle}, it follows that
\[ u_n(x,t) \leq w(x,t) \textup{ on } \om_n \setminus \inte{1} \times [0, T]. \]
Taking the limit gives that $u(x,t) \leq w(x,t)$ on $X \setminus \inte{1} \times [0,T]$.  Since $h \in C_0(X)$ by assumption, it follows that $w$ and, therefore, $u$ are in $C_0(X)$ for every $t$.
\end{proof}
}

\subsection{Khas{\cprime}minski{\u\i}-type criteria}

We also present some Khas{\cprime}minski{\u\i}-type criteria for the Feller property.  To compare, see similar tests for stochastic completeness and recurrence in \cite{Gri, Kh}. \eat{ The proofs are analogous to those found in \cite{PS}.}
These tests will be used to prove some general criteria for the Feller property and comparison theorems to the model case below.
The first result is an analogue to \cite[Proposition 5.1]{PS}.

\begin{theorem}\label{subharmonic}
If there exists a positive function $v$ such that for some $\lambda<0$ and some $\om \subset X$ finite
\[ \left\{
\begin{array}{ll}
\ow{\Lp} v \geq \lambda v &\textup{on } X \setminus \om \\
v \geq 1 & \textup{on } \partial \om,
\end{array}
\right. \]
then $v \geq h$ where $h$ is the minimal solution to (\ref{minimal solution}).  In particular, if $v \in C_0(X)$, then $G$ is Feller.
\end{theorem}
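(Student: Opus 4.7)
The plan is to establish $v \geq h$ by comparing $v$ with the finite Dirichlet approximants $h_n$ produced in Proposition \ref{fundamental convergence prop}, and then to deduce the Feller conclusion from Theorem \ref{Az theorem}. I would fix an exhaustion sequence $(\om_n)_{n=1}^\infty$ with $\om \subseteq \inte{1}$, take $h_n$ to be the $\lm$-harmonic Dirichlet solutions on $\inte{n} \setminus \om$ satisfying $h_n = 1$ on $\partial \om$ and $h_n = 0$ on $\bd{n}$, and recall that $h_n \nearrow h$ by that proposition.

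For each fixed $n$, the proof runs a discrete minimum principle on $w_n := v - h_n$ over the finite set $(\inte{n} \setminus \om) \cup \partial \om \cup \bd{n}$. The boundary data give $w_n \geq 0$ on $\partial \om$, since $v \geq 1 = h_n$ there, and $w_n > 0$ on $\bd{n}$, since $v > 0 = h_n$ there. On the complementary interior set $\inte{n} \setminus \om$, the hypothesis $\ow{\Lp} v \geq \lm v$ combined with $\Lp h_n = \lm h_n$ yields
\[ \ow{\Lp} w_n \geq \lm w_n. \]
If $w_n$ achieved a negative minimum, the boundary observations would force it to lie in the interior set, at a point $x_0$ where the standard local-minimum inequality gives $\ow{\Lp} w_n(x_0) \leq 0$. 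But $\lm < 0$ and $w_n(x_0) < 0$ together make $\lm w_n(x_0) > 0$, contradicting $\ow{\Lp} w_n(x_0) \geq \lm w_n(x_0)$. Hence $w_n \geq 0$, so $v \geq h_n$, and letting $n \to \infty$ yields $v \geq h$ on $X \setminus \om$.

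For the \emph{in particular} statement, if $v \in C_0(X)$ then $0 < h \leq v$ squeezes $h$ into $C_0(X)$, and the implication $(2) \Rightarrow (1)$ of Theorem \ref{Az theorem} gives that $G$ is Feller. The only subtlety worth verifying is that at an interior minimizer $x_0 \in \inte{n} \setminus \om$ every neighbor of $x_0$ is still a point of the domain of $w_n$, so that the local-minimum inequality for $\ow{\Lp} w_n(x_0)$ is legitimate; this is routine because $x_0 \in \inte{n}$ forces all neighbors into $\om_n$, and any neighbor lying in $\om$ must actually sit on $\partial \om$ (being adjacent to $x_0 \notin \om$). I do not expect a real obstacle — the argument is a clean discrete transcription of the Riemannian proof in Pigola--Setti.
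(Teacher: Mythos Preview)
Your proof is correct and follows essentially the same approach as the paper: exhaust, compare $w_n = v - h_n$ on $\om_n$ via the elliptic minimum principle to get $w_n \geq 0$, and pass to the limit. You have in fact supplied the details that the paper glosses over with ``from which it can easily be derived that $w_n > 0$,'' including the check that neighbors of an interior minimizer stay in the domain of $w_n$.
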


\eat{
\begin{proof}
Let $(\om_n)_{n=1}^\infty$ be an exhaustion sequence such that $\om \subset \inte{1}$.  Let $h_n$ satisfy
\[ \left\{
\begin{array}{ll}
\Lp h_n = \lambda h_n &\textup{on } \inte{n} \setminus \om \\
h_n = 1 & \textup{on } \partial \om \\
h_n = 0 & \textup{on } \bd{n}
\end{array}
\right. \]
so that $h_n \to h$.

Consider $w_n = v-h_n$.  Then
\[ \left\{
\begin{array}{ll}
\Lp w_n \geq \lambda w_n &\textup{on } \inte{n} \setminus \om \\
w_n \geq 0 & \textup{on } \partial \om \\
w_n > 0 & \textup{on } \bd{n}
\end{array}
\right. \]
from which it can easily be derived that $w_n > 0$ on $\inte{n} \setminus \om$.  By taking the limit, we obtain that $v \geq h$.
\end{proof}
}

Note that for the following two statements we need the additional assumptions that $G$ is stochastically complete and that $v$ is bounded.  The first result is an analogue to \cite[Theorem 5.3]{PS}, the second to \cite[Corollary 5.4]{PS}.

\begin{theorem} \label{superharmonic}
If $G$ is stochastically complete and there exists a positive, bounded function $v$ such that for some $\lm <0$ and some $\om \subset X$ finite
\[ \left\{
\begin{array}{ll}
\ow{\Lp} v \leq \lambda v &\textup{on } X \setminus \om \\
v \leq 1 & \textup{on } \partial \om,
\end{array}
\right. \]
then $h \geq v$ where $h$ is the minimal solution to (\ref{minimal solution}).  In particular, if $v \not \in C_0(X)$, then $G$ is not Feller.
\end{theorem}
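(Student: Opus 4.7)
The plan is to mirror the proof of Theorem \ref{subharmonic} via exhaustion and the elliptic maximum principle, with one new twist: unlike $h_n$, the function $v$ need not vanish on the outer boundary $\bd{n}$, so the naive comparison $h_n - v$ can be negative there. I would compensate by adding a correction term that carries the value of $v$ out to $\bd{n}$, and then use stochastic completeness to kill this correction in the limit.

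More precisely, fix an exhaustion $(\om_n)_{n=1}^\infty$ with $\om \subseteq \inte{1}$, let $h_n \to h$ be as in Proposition \ref{fundamental convergence prop}, set $M = \sup_X v < \infty$, and let $\psi_n$ solve
\[ \Lp \psi_n = \lm \psi_n \textup{ on } \inte{n} \setminus \om, \qquad \psi_n = 0 \textup{ on } \bd{}, \qquad \psi_n = 1 \textup{ on } \bd{n}. \]
A short computation using $\ow{\Lp} v \leq \lm v$ gives $\ow{\Lp} w_n \geq \lm w_n$ on $\inte{n} \setminus \om$ for $w_n := h_n - v + M \psi_n$, while $w_n \geq 1 - v \geq 0$ on $\bd{}$ and $w_n \geq M - v \geq 0$ on $\bd{n}$. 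A minimum-point argument exactly as in Proposition \ref{e maximum principle} then forces $w_n \geq 0$ on $\inte{n} \setminus \om$, that is, $v \leq h_n + M \psi_n$.

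The main step, and the place where stochastic completeness enters decisively, is to show $\psi_n \to 0$ pointwise. I would dominate $\psi_n$ by the function $U_n$ solving $\Lp U_n = \lm U_n$ on $\inte{n}$ with $U_n = 1$ on $\bd{n}$; a standard exhaustion and maximum principle argument shows that $(U_n)$ is monotone nonincreasing with $0 \leq U_n \leq 1$, so its pointwise limit $U$ is a bounded, nonnegative, $\lm$-harmonic function on all of $X$. Stochastic completeness, via its equivalent form as uniqueness of bounded solutions to the heat equation \cite[Theorem 1]{KL} (applied for instance to the pair $e^{-\lm t} U(x)$ and $P_t U(x)$, which share initial data $U$ and agree if unique, forcing $U \equiv 0$ via the blow-up of the left-hand norm as $t \to \infty$), then yields $\psi_n \to 0$. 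Passing to the limit in $v \leq h_n + M \psi_n$ gives $v \leq h$ on $X \setminus \om$, as required.

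For the ``in particular'' claim: if $v \notin C_0(X)$ then, since $v > 0$, there exist $c > 0$ and a sequence $x_k \to \infty$ with $v(x_k) \geq c$, hence $h(x_k) \geq c$ as well, so $h \notin C_0(X)$ and $G$ is not Feller by Theorem \ref{Az theorem}.
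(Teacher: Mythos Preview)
Your argument is correct, but it takes a genuinely different route from the paper's. The paper proceeds in two lines: set $w = v - h$ on $X \setminus \om$, extend by $0$ to $\om$, and observe that the positive part $w_+ = \max\{w,0\}$ is a bounded, nonnegative function satisfying $\ow{\Lp} w_+ \leq \lm w_+$ on all of $X$; stochastic completeness, via the characterization in \cite[Theorem~1]{KL} that forbids nontrivial bounded nonnegative $\lm$-subharmonic functions, then forces $w_+ \equiv 0$, i.e.\ $v \leq h$.

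Your approach is more constructive: you stay at the finite-exhaustion level, compensate for the nonvanishing of $v$ on $\bd{n}$ with the auxiliary barriers $\psi_n$, and then show $\psi_n \to 0$ by sandwiching with $U_n$ and identifying the limit $U$ as a bounded nonnegative $\lm$-harmonic function that must vanish. In effect you are \emph{rederiving} the relevant piece of \cite[Theorem~1]{KL} (that stochastic completeness kills such $U$) via the heat-equation uniqueness, rather than invoking it as a black box. This buys you a self-contained argument that parallels Theorem~\ref{subharmonic} closely and makes the role of stochastic completeness very explicit; the cost is length. The paper's positive-part trick is slicker because it globalizes the problem immediately and delegates the hard work to the cited characterization. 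Note, incidentally, that once you have constructed $U$ you could simply quote \cite[Theorem~1]{KL} directly on it (as the paper does on $w_+$) rather than passing through the pair $e^{-\lm t}U$ and $P_t U$; your detour is valid but unnecessary.
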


\eat{
\begin{proof}
Let $w = v-h$ so that $\ow{\Lp} w \leq \lm w$ on $X \setminus \om$ and extend $w$ to $\om$ by 0.

Let $w_+(x) = \max \{ w(x), 0 \}$ and note that $\Lp w_+ \leq \lm w_+$ on $X$.  Therefore, $w_+$ is a bounded, non-negative, $\lambda$-subharmonic function.  As $G$ is stochastically complete, it follows that $w_+ \equiv 0$ \cite[Theorem 1]{KL}.  Hence, $v \leq  h$.
\end{proof}
}

By combining the previous two results, we immediately get the following corollary.  Note that stochastic completeness is necessary here, see Remark \ref{uniqueness remark} below.
\begin{corollary} \label{uniqueness corollary}
If $G$ is stochastically complete, then for every $\om \subset X$ finite and every $\lm<0$, $h$, the minimal positive solution to the $\lm$-harmonic exterior elliptic boundary problem is the unique bounded solution to (\ref{minimal solution}).
\eat{  That is, $h$ is the unique bounded function which satisfies
\[ \left \{
\begin{array}{ll}
\ow{\Lp} h = \lm h & \textup{on } X \setminus \om \\
h =1 & \textup{on } \partial \om \\
h>0 & \textup{on } X \setminus \om.
\end{array} \right.\]
}
\end{corollary}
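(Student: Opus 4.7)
The plan is to obtain the corollary as an immediate consequence of Theorems \ref{subharmonic} and \ref{superharmonic}, which together pin any bounded solution to (\ref{minimal solution}) from above and below by the minimal solution $h$. Since $h$ itself is a bounded positive solution to (\ref{minimal solution}) (for example, by comparison with the constant function $1$ using Proposition \ref{e maximum principle} in the exhaustion construction of Proposition \ref{fundamental convergence prop}), existence of a bounded solution is not an issue, so only uniqueness needs to be argued.

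Concretely, let $v$ be any bounded solution to (\ref{minimal solution}), so $v>0$ on $X \setminus \om$, $\ow{\Lp} v = \lm v$ on $X \setminus \om$, and $v = 1$ on $\partial \om$. Reading the equalities as inequalities in one direction, $v$ satisfies the hypotheses of Theorem \ref{subharmonic} (namely $\ow{\Lp} v \geq \lm v$ on $X \setminus \om$ and $v \geq 1$ on $\partial \om$), which yields $v \geq h$ on $X \setminus \om$. Reading the same equalities in the opposite direction, $v$ satisfies the hypotheses of Theorem \ref{superharmonic} ($\ow{\Lp} v \leq \lm v$ on $X \setminus \om$ and $v \leq 1$ on $\partial \om$); here we invoke the assumption that $G$ is stochastically complete as well as the assumption that $v$ is bounded, the two extra hypotheses of Theorem \ref{superharmonic}. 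This yields $h \geq v$ on $X \setminus \om$. Combining, $v = h$ on $X \setminus \om$, and of course both are equal on $\om$ only through the boundary condition on $\partial \om$, so the two agree wherever (\ref{minimal solution}) is posed.

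There is essentially no obstacle here: the whole content of the corollary is packaged into the two preceding theorems, and the proof just observes that equality in (\ref{minimal solution}) is a pair of matching inequalities. The one point worth flagging is that both the boundedness of $v$ and stochastic completeness of $G$ are genuinely used, via the call to Theorem \ref{superharmonic}; without stochastic completeness the nonnegative $\lm$-subharmonic function $w_+ = (v-h)_+$ appearing in the proof of Theorem \ref{superharmonic} need not vanish, and indeed Remark \ref{uniqueness remark} indicates that bounded non-$C_0$ solutions to (\ref{minimal solution}) can exist when stochastic completeness fails.
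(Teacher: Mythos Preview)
Your proof is correct and matches the paper's approach exactly: the paper simply states that the corollary follows immediately by combining Theorems \ref{subharmonic} and \ref{superharmonic}, and you have spelled out precisely that combination.
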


\section{Graph Criteria}\label{s:graph}
In this section we prove several general curvature-type criteria for the Feller property on graphs, fully characterize the Feller property in the weakly spherically symmetric case and then prove some comparison theorems.  We also illustrate our results with examples and make some remarks concerning the stability of the Feller property.

\subsection{General criteria}
We first prove an analogue to a theorem of Yau which states that a Riemannian manifold with Ricci curvature bounded below satisfies the Feller property \cite{Y}.  This analogue, in particular, implies that if $\Lp$ is bounded, then the graph is Feller.  We follow the maximum principle approach used by Dodziuk in \cite{D} whose analogue for graphs has been developed in \cite{DM, D2, We}.

There are two equivalent ways of stating this criterion: one involves the Laplacian of a radial function based on the metric and one involves the difference of curvature-type quantities which are defined next and will play a substantial role in subsequent considerations. 

\begin{definition} 
Let $x_0 \in X$ and let $\rho(x) = d(x,x_0)$ where $d$ denotes the combinatorial graph metric.  If $x \in S_r(x_0) = \{ y \ | \ \rho(y)=r \}$, then we call
\[ \ka_\pm(x) = \frac{1}{m(x)} \sum_{y \in S_{r \pm 1}(x_0)} b(x,y) \] 
the \emph{outer} and \emph{inner} curvatures, respectively.
\end{definition}

\begin{theorem}\label{general_thm}
If $\ka_+(x) - \ka_-(x) \leq K $ for all $x \in X$ and all $x_0 \in X$ and some $K>0$, then $G$ is Feller.
\end{theorem}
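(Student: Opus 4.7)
The plan is to verify the elliptic characterization of the Feller property from Theorem \ref{Az theorem} by means of the Khas{\cprime}minski{\u\i}-type criterion of Theorem \ref{subharmonic}. Concretely, for some $\lm<0$ and some finite $\om\subset X$, I would produce a positive function $v\in C_0(X)$ with $v\geq 1$ on $\partial\om$ and $\LF v\geq\lm v$ on $X\setminus\om$.

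Fix a base point $x_0\in X$, write $\rho(x)=d(x,x_0)$, take $\om=\{x\in X:\rho(x)\leq R\}$ for some radius $R\geq 0$, and try the radial comparison function $v(x)=\phi(\rho(x))$ with $\phi(r)=e^{-\alpha(r-R)}$ for a parameter $\alpha>0$. Because each neighbor of $x\in S_r(x_0)$ lies in $S_{r-1}\cup S_r\cup S_{r+1}$ and the contributions from $S_r$ cancel, the Laplacian collapses to
\[ \LF v(x) = \ka_+(x)\bigl(\phi(r)-\phi(r+1)\bigr) + \ka_-(x)\bigl(\phi(r)-\phi(r-1)\bigr), \]
and substituting the exponential form reduces the desired inequality $\LF v\geq\lm v$ on $X\setminus\om$ to the pointwise bound
\[ (e^\alpha-1)\ka_-(x) - (1-e^{-\alpha})\ka_+(x) \leq |\lm|, \qquad x \in X\setminus\om. \]

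The key technical step is turning the curvature hypothesis into a uniform upper bound on $\ka_+(x)$ and $\ka_-(x)$ individually. This is achieved by evaluating the hypothesis at the base-point pair $(x,x_0)=(x_0,x_0)$: one has $\ka_-(x_0)=0$ because $S_{-1}(x_0)=\emptyset$, while $\ka_+(x_0)=\deg(x_0)$, so the hypothesis gives $\deg(x_0)\leq K$. Since the assumption must hold for \emph{every} choice of base point, this yields a uniform bound $\deg(x)\leq K$, and hence $\ka_+(x),\ka_-(x)\leq K$, everywhere on $X$. With this in hand the displayed inequality is immediate with, for example, $\lm=-K(e^\alpha-1)$ and any $\alpha>0$; the resulting exponentially decaying $v$ lies in $C_0(X)$, and Theorem \ref{subharmonic} delivers the Feller property.

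The chief obstacle is precisely this: the curvature hypothesis $\ka_+-\ka_-\leq K$, taken pointwise in $x$, constrains only how much the outer curvature may exceed the inner one and does not a priori bound $\ka_-(x)$ alone, so a naive radial exponential ansatz cannot close the estimate. It is only by exploiting the uniformity of the hypothesis in the base point $x_0$, through the evaluation at $x=x_0$, that the pointwise curvature bound is upgraded to a uniform weighted-degree bound, in direct analogy with how in Dodziuk's manifold argument the Laplacian comparison theorem converts a uniform lower Ricci bound into a uniform upper bound on $\Delta r$.
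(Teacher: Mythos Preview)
Your proof is correct and takes a genuinely different route from the paper. The paper adapts Dodziuk's parabolic argument directly: for each base point $x_0$ it applies the parabolic maximum principle (Proposition \ref{p maximum principle}) to the auxiliary function $w(x,t)=u(x,t)-C_0(R)-\frac{C}{R}(Kt+\rho(x))$ on $B_R(x_0)\times[0,T]$, and then lets $x_0$ range over vertices far from $\supp u_0$ to force $u(\cdot,t)\in C_0(X)$. You instead go through the elliptic Khas{\cprime}minski{\u\i} criterion (Theorem \ref{subharmonic}) with a radial exponential barrier.

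The most striking feature of your argument is the observation that evaluating the hypothesis at $x=x_0$ forces $\Deg(x_0)\leq K$ for \emph{every} $x_0$, so the assumption of Theorem \ref{general_thm} is in fact equivalent to boundedness of the weighted degree, i.e., to the hypothesis of Corollary \ref{general corollary}. This collapse has no manifold analogue: there a uniform lower Ricci bound yields the Laplacian comparison $\Delta r\geq -K$ without forcing any uniform operator bound, which is why Dodziuk's linear-in-$\rho$ barrier is the natural tool in that setting. Your proof thus reveals that, as stated, the theorem is no stronger than its corollary; the paper's parabolic proof never isolates this reduction and instead exploits the uniformity in $x_0$ only at the final step of moving the base point. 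What the paper's approach buys is a template that would survive a weakening of the hypothesis to something genuinely intermediate between a fixed-base-point bound and bounded degree, whereas your argument, by design, hinges on the degree bound itself.
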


\begin{remark}
\begin{itemize}
\item[(i)]  It is easy to see that the assumption of the theorem is equivalent to $\ow{\Lp} \rho(x) \geq -K$ for all $x \in X$ and all  $x_0 \in X.$
This is the condition implied by a uniform lower bound on the Ricci curvature in the Riemannian setting \cite[Lemma 2.3]{D}.

\item[(ii)]  The assumption of Theorem \ref{general_thm} also implies the stochastic completeness of the graph \cite[Theorem 4.15]{We}.  However, note that stochastic completeness only requires that this assumption hold for \emph{some} $x_0 \in X$.  For the Feller property, the assumption that the curvature bound holds for all $x_0$ is crucial, see Example \ref{model example}.  The result on stochastic completeness for graphs was later improved to allow some growth of $\ka_+ - \ka_-$.  More specifically, Huang showed that if $f>0$ is increasing, differentiable and satisfies
\[ \int^\infty \frac{1}{f(r)}dr=\infty, \]
then $\ka_+(x) - \ka_-(x) \leq f(\rho(x))$ for all $x$ and some $x_0$ implies stochastic completeness \cite[Theorem 5.4]{Hua}. 

In the case of manifolds, Yau's result was also improved by Hsu \cite{Hsu1} to allow some growth using probabilistic techniques.  The analogue of Hsu's result would say that if $f>0$ is increasing, differentiable and satisfies
\[ \int^\infty \frac{1}{\sqrt{f(r)}} dr = \infty, \]
then $\ka_+(x) - \ka_-(x) \leq f(\rho(x))$ for all $x$ and all $x_0$ should imply the Feller property.  This result would then be sharp as can be seen by Example \ref{model example}.

\eat{\item[(iii)]  The theorem could be extended to the non-locally finite case by replacing $d$ with any metric such that distance balls with respect to the metric are finite.  This assumption is equivalent to geodesic completeness, see \cite{HKMW}.}

\item[(iii)]  It is interesting to note that an assumption such as $\ka_+(x) -\ka_-(x) \geq K$ for all $x$ and all $x_0$ does not imply the Feller property.  In fact, we can construct examples of graphs with arbitrarily large curvature growth in all directions which are not Feller, see Example \ref{anti_example}.  This contrasts with Riemannian manifolds for which all Cartan-Hadamard manifolds are Feller, see \cite[Corollary 7.2]{PS}.  However, by modifying the definition of curvature growth slightly, such a result is possible, see Theorem \ref{general_thm2}.

\end{itemize}
\end{remark}

\begin{proof}
We follow the proof given in \cite[Theorem 4.3]{D}.

Let $u_0 \geq 0$ and suppose that $\supp u_0 = \om$ is finite.  Let $u(x,t) = P_t u_0(x)$  with $C_0(R) = \max_{x \in B_R(x_0) } u_0(x)$ and $C = \sup_{(x,t) \in X \times [0,T]} u(x,t).$  By Proposition \ref{finite support} it suffices to show that $u(\cdot, t) \in C_0(X)$.

Consider the function
\[ w(x,t) = u(x,t) - C_0(R) - \frac{C}{R} (Kt + \rho(x)) \]
on $B_R(x_0) \times [0,T]$ where $\rho(x) = d(x,x_0)$ and $B_R(x_0) = \{ x \ | \ \rho(x) \leq R \}$.  An easy calculation, using that $\ow{\Lp} \rho(x) + K \geq 0$ as mentioned in part (i) of the remark above, gives that
\[ \left( \ow{\Lp} + \pdt \right) w(x,t) \leq 0.  \]
Furthermore, $w(x,t) \leq 0$ on both $B_R(x_0) \times \{0\}$ and $\partial B_R(x_0) \times [0, T]$.  Therefore, using the maximum principle in parabolic form, Proposition \ref{p maximum principle}, gives that $w(x,t) \leq 0$ on $B_R(x_0) \times [0,T]$ 
so that
\[ u(x,t) \leq C_0(R) + \frac{C}{R} (Kt + \rho(x)). \]

Let $\epsilon>0$ and
suppose that $x_0$ is such that $d(x_0, \om) >R > \frac{C Kt}{\epsilon}$.  Then, $C_0(R) = 0$ on $B_R(x_0)$ and $\rho(x_0) = 0$.  Hence, we get that
\[ u(x_0,t) \leq \frac{C Kt}{R} < \epsilon. \]
Therefore, $u(x,t)$ is arbitrarily small outside of $B_R(\om) = \{x \ | \ d(x,\om) \leq R\}$ which is a finite set by local finiteness.  Hence, $u(\cdot,t) \in C_0(X)$ for all $t$.
\end{proof}

\begin{corollary}\label{general corollary}
If $\Lp$ is a bounded operator on $\ell^2(X,m)$, then $G$ is Feller.
\end{corollary}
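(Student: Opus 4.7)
The plan is to deduce the corollary directly from Theorem \ref{general_thm} by showing that boundedness of $\Lp$ forces a uniform bound on $\ka_+(x) - \ka_-(x)$ that is independent of the base point $x_0$.

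First I would recall the characterization, cited from \cite{KL2} earlier in the excerpt, that $\Lp$ is a bounded operator on $\ell^2(X,m)$ if and only if $\Deg \in \ell^\infty(X)$, where $\Deg(x) = \frac{1}{m(x)} \sum_y b(x,y)$. So under the hypothesis we have $K := \sup_{x \in X} \Deg(x) < \infty$.

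Next, I would fix an arbitrary $x_0 \in X$ and an arbitrary $x \in X$, and let $r = d(x,x_0)$ so that $x \in S_r(x_0)$. Since every neighbor $y$ of $x$ lies in one of $S_{r-1}(x_0)$, $S_r(x_0)$, or $S_{r+1}(x_0)$, the sum defining $\ka_+(x)$ runs over a subset of the neighbors of $x$, giving
\[ \ka_+(x) - \ka_-(x) \leq \ka_+(x) = \frac{1}{m(x)} \sum_{y \in S_{r+1}(x_0)} b(x,y) \leq \frac{1}{m(x)} \sum_y b(x,y) = \Deg(x) \leq K. \]
This bound is uniform in both $x$ and $x_0$, which is precisely the hypothesis of Theorem \ref{general_thm}.

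Applying that theorem yields the Feller property. There is no substantive obstacle here: the only point one might worry about is that the curvature bound must hold for \emph{all} $x_0$ (as emphasized in the remark following Theorem \ref{general_thm}), but the bound by $\Deg(x)$ has nothing to do with the choice of base point, so this is automatic.
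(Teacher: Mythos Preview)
Your proof is correct and follows the same approach as the paper: invoke the characterization from \cite{KL2} that $\Lp$ is bounded if and only if $\Deg$ is bounded, observe that $\ka_+(x)-\ka_-(x) \leq \Deg(x)$ independently of the base point $x_0$, and apply Theorem~\ref{general_thm}. The paper's proof is simply a terser version of what you wrote.
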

\begin{proof}
As previously mentioned, $\Lp$ is bounded if and only if $\deg(x) = \frac{1}{m(x)} \sum_y b(x,y)$ is a bounded function on $X$ \cite[Theorem 11]{KL2}.  This is equivalent to $\ka_+(x)-\ka_-(x) \leq \deg(x)$ being bounded for all choices of $x_0$. 
\end{proof}

We also have the following criterion which involves the vertex measure of the graph only.  The local finiteness assumption is not necessary for this result to hold.
\begin{proposition}\label{measure proposition}
If $\inf_{x\in X} m(x)>0$, then $G$ is Feller.
\end{proposition}
\begin{proof}
This follows directly as $\ell^2(X,m) \subseteq C_0(X)$ in this case and $P_t(C_c(X)) \subseteq \ell^2(X,m)$ by the spectral theorem.
\end{proof}
\eat{Let $u_0 \geq 0$, $u_0 \in C_c(X)$, and let $u(x,t) = P_t u_0(x)$.  Since $C_c(X) \subseteq \ell^2(X,m)$, it follows that $u(\cdot, t) \in \ell^2(X,m)$ for all $t\geq0$.  Now, assume that there exists a sequence $x_n \to \infty$ such that $u(x_n,t) \not \to 0$ for some $t$.  By passing to a subsequence, we may assume that $u(x_n,t)> K>0$ for all $n$.  It then follows that
\[ \aV{u(\cdot, t)}^2 \geq \sum_n u(x_n, t)^2 m(x_n) > K^2 \sum_n m(x_n) \]
yielding a contradiction.  It follows that $u(\cdot, t) \in C_0(X)$ for all $t \geq0$.}

\begin{remark}
\begin{itemize}
\item[(i)]  The condition on the measure above also implies the essential self-adjointness of the Laplacian, see \cite[Theorem 6]{KL} and \cite[Corollary 9.2]{HKLW}.
\item[(ii)]  For spherically symmetric graphs defined below, the condition that $m(X)=\infty$ suffices to show that the graph is Feller, see Corollary \ref{model corollary}.  However, this does not hold for general graphs, see Remark \ref{stability remark}.
\end{itemize}
\end{remark}

We now prove criteria for the (failure of the) Feller property using a modified curvature-type quantity. In order to do this we will twist the curvatures by a spherically symmetric function. These results seem to have no analogues in the manifold setting.  They will be used in some of the examples found in Subsection \ref{sub_ex_stab}.  Note that the conditions here have to hold only for \emph{some} $x_0$ in contrast to Theorem \ref{general_thm} where the condition has to hold for all $x_0$.

\begin{definition}
A function $f$ is said to be \emph{spherically symmetric with respect to} $x_0$ if the values of $f$ only depend on the distance to the vertex $x_0$.  In this case, we will write $f(r)$ for $f(x)$ when $x \in S_r(x_0)$.
\end{definition}

We first show that if a modified curvature is not decaying too strongly in all directions from some $x_0$, then the graph is Feller.

\begin{theorem}\label{general_thm2}
Let $f>0$ be spherically symmetric with respect to $x_0 \in X$ such that $\oh{f}(r) = f(r)-f(r+1) > 0$ for all $r > R$ for some $R$ and $f(r) \to 0$ as $r \to \infty$.  If 
\[ \ka_+(x) - \ka_-(x) \left(\frac{\oh{f}(r-1)}{\oh{f}(r)} \right) \geq \lambda \frac{f(r)}{\oh{f}(r)} \]
for all $x\in S_r(x_0)$, $r > R$, and $\lambda<0$,
then $G$ is Feller.
\end{theorem}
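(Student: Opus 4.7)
The natural strategy is to apply the subharmonic Khas{\cprime}minski{\u\i} criterion (Theorem \ref{subharmonic}) with the radial barrier function $v(x) = f(\rho(x))$, where $\rho(x) = d(x,x_0)$. Since $f(r)\to 0$ as $r\to\infty$ and distance spheres are finite, one has $v\in C_0(X)$ automatically. I would take $\om = B_R(x_0)$ and aim to verify that $\LF v \geq \lm v$ on $X\setminus\om$ and that $v\geq 1$ on $\partial\om$ after a harmless rescaling.

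The heart of the proof is a direct computation of $\LF v(x)$ for $x\in S_r(x_0)$ with $r>R$. Because $v$ is constant on each sphere, all ``horizontal'' edges (those joining $x$ to other vertices in $S_r(x_0)$) contribute nothing, so only the edges counted by $\ka_\pm(x)$ matter. Writing the contributions with the convention $\oh{f}(s)=f(s)-f(s+1)$ yields
\[ \LF v(x) \;=\; \ka_+(x)\,\oh{f}(r) \;-\; \ka_-(x)\,\oh{f}(r-1). \]
Assuming $\oh{f}(r)>0$ for $r>R$ (the natural reading of the hypothesis, since $f>0$ decays to zero), one divides by $\oh{f}(r)$ and sees that the assumed inequality
\[ \ka_+(x) - \ka_-(x)\,\frac{\oh{f}(r-1)}{\oh{f}(r)} \;\geq\; \lm\,\frac{f(r)}{\oh{f}(r)} \]
is exactly equivalent to $\LF v(x) \geq \lm f(r) = \lm v(x)$ on $X\setminus\om$.

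To finish, set $c=\min_{x\in\partial\om}v(x)>0$ (a minimum over a finite set of positive values) and replace $v$ by $v/c$. The inequality $\LF v \geq \lm v$ is homogeneous, so it is preserved, while now $v\geq 1$ on $\partial\om$. Theorem \ref{subharmonic} then delivers $v\geq h$, so $h\in C_0(X)$ and $G$ is Feller. The main obstacle I anticipate is the sign of $\oh{f}$: one should either incorporate $\oh{f}>0$ into the running hypothesis or remark that, since $f$ is eventually forced downward by the condition $f(r)\to 0$, the sign ambiguity is essentially a cosmetic issue; no deeper analytic difficulty arises, since the barrier argument is entirely local and the decay of $f$ does all the work at infinity.
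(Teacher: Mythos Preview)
Your proposal is correct and follows exactly the paper's approach: compute $\LF f(x) = \ka_+(x)\oh{f}(r) - \ka_-(x)\oh{f}(r-1)$ for the radial function, observe that the hypothesis is equivalent to $\LF f \geq \lm f$ outside $B_R(x_0)$, rescale so that $f \equiv 1$ on $\partial\om = S_R(x_0)$, and invoke Theorem~\ref{subharmonic}. Your remark about needing $\oh{f}(r)>0$ for the division step is well taken---the paper tacitly assumes this (as is the case in the applications such as Corollary~\ref{growth_corollary}), and you are right that it should be part of the running hypothesis.
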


\begin{proof}
It is easy to see that the condition above is equivalent to $\ow{\Delta} f(x) \geq \lambda f(x)$ for all $x \not \in B_R(x_0)$ since, for $f$ spherically symmetric with respect to $x_0$,
\begin{align*}
 \ow{\Lp} f(x) &=\ka_+(x)(f(r)-f(r+1)) + \ka_-(x)(f(r)-f(r-1))\\
  &= \ka_+(x) \oh{f}(r) - \ka_-(x) \oh{f}(r-1). 
\end{align*}
Letting $\om = B_R(x_0)$ and rescaling $f$ so that $f\equiv 1$ on $S_R(x_0)$, now gives the result by Theorem \ref{subharmonic}.
\end{proof}

\begin{corollary}\label{growth_corollary}
If for all $x \in S_r(x_0)$, $r>R$ for some $R \geq 1$,  $x_0 \in X$ and $\lm<0$
\[ \ka_+(x) - \ka_-(x) \left(\frac{r+1}{r-1}\right) \geq \lambda(r+1) \] 
 then $G$ is Feller.  
\end{corollary}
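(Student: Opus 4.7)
The plan is to obtain this corollary as a direct specialization of Theorem \ref{general_thm2} via a judicious choice of the spherically symmetric function $f$.

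Specifically, I would take $f(r) = 1/r$, which is positive and well-defined for $r \geq 1$, is spherically symmetric with respect to any chosen $x_0$, and clearly satisfies $f(r) \to 0$ as $r \to \infty$. A direct calculation gives
\[ \oh{f}(r) = \frac{1}{r} - \frac{1}{r+1} = \frac{1}{r(r+1)}, \]
which is nonzero for all $r \geq 1$, so the standing hypotheses of Theorem \ref{general_thm2} are in force.

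Next I would compute the two ratios that appear in Theorem \ref{general_thm2}. One finds
\[ \frac{\oh{f}(r-1)}{\oh{f}(r)} = \frac{r(r+1)}{(r-1)r} = \frac{r+1}{r-1}, \qquad \frac{f(r)}{\oh{f}(r)} = \frac{1/r}{1/(r(r+1))} = r+1, \]
valid for $r \geq 2$ (and hence for $r > R$ whenever $R \geq 1$). Plugging these into the hypothesis of Theorem \ref{general_thm2} recovers exactly the assumption of the corollary, namely
\[ \ka_+(x) - \ka_-(x)\left(\frac{r+1}{r-1}\right) \geq \lm(r+1). \]

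Therefore Theorem \ref{general_thm2} applies and $G$ is Feller. There is essentially no obstacle here; the only small point to verify is that the rescaling step at the end of the proof of Theorem \ref{general_thm2} (to normalize $f \equiv 1$ on $S_R(x_0)$) does not affect the inequality, which is immediate since the condition $\ow{\Delta} f \geq \lm f$ is preserved under multiplication by a positive constant. The choice $f(r) = 1/r$ is the natural one because it makes the twisted curvature combination in Theorem \ref{general_thm2} take a particularly clean polynomial-in-$r$ form.
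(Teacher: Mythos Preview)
Your proposal is correct and matches the paper's proof exactly: the paper also takes $f(x) = 1/\rho(x)$ (i.e., $f(r)=1/r$) and applies Theorem \ref{general_thm2}. Your explicit computations of $\oh{f}(r)$, $\oh{f}(r-1)/\oh{f}(r)$, and $f(r)/\oh{f}(r)$ simply fill in details the paper leaves implicit.
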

\begin{proof}
This follows directly by letting $f(x) = \frac{1}{\rho(x)}$ where $\rho(x)=d(x,x_0)$ in Theorem \ref{general_thm2}.
\end{proof}

In the stochastically complete case, one can state a similar criterion for the failure of the Feller property.  Recall that stochastic completeness means that heat is conserved in the graph at all times, equivalently, that bounded solutions of the heat equation are uniquely determined by initial data.  Some conditions for stochastic completeness in the weighted graph setting are given in \cite{D2, KL, Hua, Hua4, GHM, KLW, Fol, Hua3}, amongst other works.

\begin{theorem}\label{general_thm3}
Let $G$ be stochastically complete and let $f>0$ be bounded and spherically symmetric with respect to $x_0 \in X$  such that $\oh{f}(r) = f(r)-f(r+1) > 0$ for all $r > R$ for some $R$ and $f(r) \not \to 0$ as $r \to \infty$.  If
\[ \ka_+(x) - \ka_-(x) \left(\frac{\oh{f}(r-1)}{\oh{f}(r)} \right) \leq \lambda \frac{f(r)}{\oh{f}(r)} \]
 for all $x\in S_r(x_0)$, $r > R$ for some $R$ and $\lambda<0$, then $G$ is not Feller.
\end{theorem}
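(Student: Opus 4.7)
The plan is to mirror the proof of Theorem \ref{general_thm2} but apply Theorem \ref{superharmonic} instead of Theorem \ref{subharmonic}, so that the function $f$ plays the role of a positive, bounded, $\lambda$-superharmonic barrier which stays bounded away from zero at infinity.

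First, I would verify that the hypothesis is equivalent to the pointwise inequality $\ow{\Lp} f(x) \leq \lambda f(x)$ for all $x \in X \setminus B_R(x_0)$. Since $f$ is spherically symmetric with respect to $x_0$, the same computation used in Theorem \ref{general_thm2} gives
\[ \ow{\Lp} f(x) = \ka_+(x)\oh{f}(r) - \ka_-(x)\oh{f}(r-1) \]
for $x \in S_r(x_0)$, so dividing the assumed curvature inequality by $\oh{f}(r)$ (with appropriate sign bookkeeping, which is already implicit in the statement of the theorem) translates exactly into $\ow{\Lp} f \leq \lambda f$ on $X \setminus B_R(x_0)$.

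Next, set $\om = B_R(x_0)$ and replace $f$ by $v(x) = f(x)/c$, where $c = \max_{x \in \partial \om} f(x) = f(R)$; this is a finite positive number since $f$ is spherically symmetric and positive. The normalized function $v$ is still positive, bounded, and spherically symmetric, it satisfies $\ow{\Lp} v \leq \lambda v$ on $X \setminus \om$ (both sides scale linearly), and it satisfies $v \leq 1$ on $\partial \om \subseteq S_R(x_0)$. Since $G$ is stochastically complete, Theorem \ref{superharmonic} applies and yields $h \geq v$ on $X \setminus \om$, where $h$ is the minimal positive solution to the $\lambda$-harmonic exterior boundary problem on $X \setminus \om$.

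Finally, I would use the hypothesis $f(r) \not\to 0$ to conclude that $v \notin C_0(X)$: there exists a subsequence $r_k \to \infty$ with $f(r_k) \geq \delta > 0$, and picking any $x_{n_k} \in S_{r_k}(x_0)$ (which exist because $G$ is infinite and connected) produces a sequence $x_{n_k} \to \infty$ with $v(x_{n_k}) \geq \delta/c$. Since $h \geq v$, also $h \notin C_0(X)$, so by the elliptic characterization Theorem \ref{Az theorem}, $G$ is not Feller. The main obstacle is just the sign bookkeeping in translating the curvature inequality into $\ow{\Lp} f \leq \lambda f$; everything else is a direct application of the machinery already developed.
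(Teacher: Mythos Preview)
Your proposal is correct and follows exactly the approach the paper intends: the paper's own proof is a single sentence stating that the argument is analogous to that of Theorem \ref{general_thm2} but invoking Theorem \ref{superharmonic} in place of Theorem \ref{subharmonic}, and you have simply written out those details (the translation to $\ow{\Lp} f \leq \lambda f$, the choice $\om = B_R(x_0)$, the rescaling to force $v \leq 1$ on $\partial\om$, and the conclusion $h \notin C_0(X)$ from $f(r)\not\to 0$). Your caveat about the sign of $\oh{f}(r)$ is apt and is likewise glossed over in the paper.
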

\begin{proof}
The proof follows analogously to the proof of Theorem \ref{general_thm2} by using Theorem \ref{superharmonic}.
\end{proof}

\begin{corollary}\label{decay_corollary}
If $G$ is stochastically complete and for all $x \in S_r(x_0)$, $r> R$, for some $R\geq1$, $x_0 \in X$ and $\lm<0$
\[ \ka_+(x) - \ka_-(x) \left(\frac{r+1}{r-1}\right) \leq \lambda(r+1)^2 \] 
then $G$ is not Feller.  
\end{corollary}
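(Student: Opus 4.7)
The plan is to deduce this corollary from Theorem \ref{general_thm3} by exhibiting an explicit spherically symmetric comparison function $f$ that converts the hypothesis in the corollary into the hypothesis of the theorem. The key observation is that Corollary \ref{growth_corollary} produced the ratios $\tfrac{r+1}{r-1}$ and $r+1$ from the choice $f(x) = 1/\rho(x)$, so for the corollary at hand we want a function whose discrete ratios give $\tfrac{r+1}{r-1}$ and $(r+1)^{2}$, but which (unlike $1/\rho$) is bounded and does \emph{not} vanish at infinity as required by Theorem \ref{general_thm3}.

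The natural candidate is $f(x) = 1 + \tfrac{1}{\rho(x)}$, defined on $X \setminus \{x_0\}$, so that $f(r) = \tfrac{r+1}{r}$. First I would verify that $f$ meets the standing assumptions of Theorem \ref{general_thm3}: it is spherically symmetric with respect to $x_0$; it is bounded above by $2$ on $\{\rho \geq 1\}$; and $f(r) \to 1 \neq 0$ as $r \to \infty$. Next I would compute the two quantities appearing in the theorem:
\[
\oh{f}(r) = f(r) - f(r+1) = \frac{1}{r} - \frac{1}{r+1} = \frac{1}{r(r+1)} > 0
\]
for all $r \geq 1$, so in particular $\oh{f}(r) \neq 0$ for all $r > R$. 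A direct calculation then gives
\[
\frac{\oh{f}(r-1)}{\oh{f}(r)} = \frac{r+1}{r-1}, \qquad \frac{f(r)}{\oh{f}(r)} = (r+1)^{2}.
\]

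With these identities in hand, the hypothesis
\[
\ka_+(x) - \ka_-(x)\left(\frac{r+1}{r-1}\right) \leq \lambda (r+1)^{2}
\]
for $x \in S_r(x_0)$ and $r > R$ becomes exactly
\[
\ka_+(x) - \ka_-(x)\left(\frac{\oh{f}(r-1)}{\oh{f}(r)}\right) \leq \lambda \frac{f(r)}{\oh{f}(r)},
\]
which is the hypothesis of Theorem \ref{general_thm3}. Since $G$ is assumed stochastically complete, that theorem applies and yields that $G$ is not Feller.

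There is no real obstacle here beyond guessing the correct $f$; the role of $1 + 1/\rho$ versus $1/\rho$ is precisely to keep $f$ bounded away from zero at infinity (matching the hypothesis of Theorem \ref{general_thm3} rather than Theorem \ref{general_thm2}) while leaving the discrete differences $\oh{f}(r)$ unchanged from the growth-corollary case, which is what produces the asymmetry between the factor $(r+1)$ in Corollary \ref{growth_corollary} and the factor $(r+1)^{2}$ here.
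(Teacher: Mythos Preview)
Your proof is correct and takes exactly the same approach as the paper, which simply says to let $f(x)=\frac{1}{\rho(x)}+1$ in Theorem~\ref{general_thm3}. You have merely supplied the routine verifications that the paper omits.
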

\begin{proof}
This follows by letting $f(x) = \frac{1}{\rho(x)} +1$ where $\rho(x) = d(x,x_0)$ in Theorem \ref{general_thm3}.
\end{proof}


\subsection{The spherically symmetric case}
We now give a full characterization of the Feller property in the weakly spherically symmetric case.  The result in the case of manifolds is due to Azencott \cite{Az}; we follow the proof given by Pigola and Setti \cite{PS}.

\begin{definition}
A weighted graph $G$ is called \emph{weakly spherically symmteric} or \emph{model} if there exists a vertex $x_0$ such that the curvatures $\ka_\pm$ are spherically symmetric with respect to $x_0$.
\end{definition}

In this case, we will denote the vertex $x_0$ by $o$ and call it the \emph{root} of the model.  We will also write
\[ \ka_\pm(x) = \vka_\pm(r) \]
for all $x \in S_r(o)$ where $\vka_\pm:\N_0 \to \R$ in preparation for our comparison theorems below.  We will, in general, suppress the dependence on $o$ and simply write $S_r$ for $S_r(o)$ and $B_r$ for $B_r(o) = \cup_{i=0}^r S_i$.  Furthermore, we let $B_r^c$ denote the complement of $B_r$ in $X$.

\eat{
The property of being weakly spherically symmetric is equivalent to several other conditions such as both the Laplacian and heat semigroup commuting with an averaging operator, see \cite[Theorem 1]{KLW} for more details.  In particular, this gives that $p_t(o, \cdot)$ is a spherically symmetric function with respect to the root.
}

We also define
\[ \bbr{r}= \sum_{x \in S_r} \sum_{y \in S_{r+1}} b(x,y) = \vka_+(r) m(S_r). \]
Furthermore, note that,
\begin{equation}\label{curvatures}
\vka_+(r) m(S_r) = \vka_-(r+1)m(S_{r+1})
\end{equation}
for all $r$.  This will be used in several places below.

\begin{theorem}\label{model theorem}
Let $G$ be model.  Then $G$ is Feller if and only if either
\begin{itemize}
\item[(1)] $\displaystyle{\sum_r \frac{1}{\bbr{r}} < \infty}$ or 
\item[(2)] $\displaystyle{\sum_r \frac{1}{\bbr{r}} = \infty \textup{ and }\sum_r \frac{m(B_r^c)}{\bbr{r}} = \infty.}$
\end{itemize}
\end{theorem}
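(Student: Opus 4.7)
The plan is to apply Theorem \ref{Az theorem} with $\om = \{o\}$, reducing the Feller property to the condition that the minimal positive solution $h$ of $\ow{\Lp} h = \lm h$ on $X \setminus \{o\}$ with $h(o)=1$ lies in $C_0(X)$. By the model assumption and Proposition \ref{fundamental convergence prop}, this $h$ can be taken spherically symmetric, and an elliptic maximum principle argument applied to the Dirichlet approximants $h_n$ shows that each $h_n$, hence $h$, is non-increasing in $r$ (otherwise $\ow{\Lp} h_n = \lm h_n$ would fail at an interior zero of $h_n$). Writing $h(r)$ for $h|_{S_r}$ and $h_\infty = \lim_{r \to \infty} h(r) \in [0,1]$, Feller becomes equivalent to $h_\infty = 0$.

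Substituting a radial $h$ into $\ow{\Lp} h = \lm h$, multiplying by $m(S_r)$, and using $\ka_+(r) m(S_r) = \bbr{r}$ together with $\ka_-(r) m(S_r) = \bbr{r-1}$ from (\ref{curvatures}) yields the first-order recursion
\[
D(r) - D(r-1) = -\lm \, m(S_r) h(r), \qquad r \geq 1,
\]
where $D(r) := \bbr{r} (h(r+1) - h(r)) \leq 0$. Hence $D$ is non-decreasing to a finite limit $D_\infty \leq 0$ and telescoping gives the two identities
\[
1 - h_\infty = \sum_{r \geq 0} \frac{|D(r)|}{\bbr{r}}, \qquad |D(r)| = |D_\infty| + |\lm| \sum_{s > r} m(S_s) h(s).
\]

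If (1) holds, Feller would follow from Theorem \ref{subharmonic} applied to the radial test function $v(r) = C \sum_{s \geq r} 1/\bbr{s}$ normalized so $v(o)=1$; a direct calculation using $\ka_+(r)/\bbr{r} = 1/m(S_r) = \ka_-(r)/\bbr{r-1}$ shows $v$ is actually harmonic on $X \setminus \{o\}$, so $\ow{\Lp} v = 0 \geq \lm v$, and $v \in C_0(X)$ by (1); thus $h \leq v$ and $G$ is Feller. If instead $\sum_r 1/\bbr{r} = \infty$, the first identity forces $|D_\infty| = 0$ (otherwise the right-hand side diverges), whence $|D(r)| = |\lm| \sum_{s > r} m(S_s) h(s)$; sandwiching $h_\infty \leq h(s) \leq 1$ then yields
\[
|\lm| h_\infty m(B_r^c) \;\leq\; |D(r)| \;\leq\; |\lm| m(B_r^c).
\]
Under (2), the lower bound combined with $1 - h_\infty = \sum_r |D(r)|/\bbr{r}$ forces $h_\infty = 0$ (otherwise the sum is infinite), so $G$ is Feller. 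If both (1) and (2) fail, the upper bound gives $1 - h_\infty \leq |\lm| \sum_r m(B_r^c)/\bbr{r} < \infty$, which for $|\lm|$ sufficiently small is strictly less than $1$; hence $h_\infty > 0$, and since Theorem \ref{Az theorem} grants the equivalence for any $\lm < 0$, $G$ is not Feller.

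The main obstacle is verifying that the minimal positive $h$ is genuinely monotone decreasing and that the recursion passes cleanly to the limit, which I would address by first proving strict monotonicity of each $h_n$ (via Proposition \ref{e maximum principle} together with the observation that an interior zero of $h_n$ is incompatible with $\ow{\Lp} h_n = \lm h_n$), then passing to the limit by monotone convergence and using the bound $\sum_s m(S_s) h(s) \leq (D_\infty - D(0))/|\lm|$ to ensure the displayed series are absolutely convergent.
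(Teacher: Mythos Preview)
Your proof is correct and largely parallels the paper's approach: reducing via Theorem~\ref{Az theorem} to the radial minimal solution $h$, deriving the recursion for $f(r)=-D(r)=\bbr{r}(h(r)-h(r+1))$, and handling case~(1) with the harmonic test function $v(r)=C\sum_{s\geq r}1/\bbr{s}$ through Theorem~\ref{subharmonic}. The one genuine difference is in the converse direction (showing non-Feller when both (1) and (2) fail). The paper uses the sharper upper bound $f(r)\leq -\lm\, h(r+1)\,m(B_r^c)$ to obtain $\frac{h(r)}{h(r+1)}-1\leq -\lm\, m(B_r^c)/\bbr{r}$, then applies $\ln(a/b)\leq a/b-1$ and telescopes $\sum(\ln h(r)-\ln h(r+1))=-\ln h_\infty=\infty$; this works for any fixed $\lm<0$. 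You instead use the cruder bound $|D(r)|\leq |\lm|\,m(B_r^c)$ and exploit the freedom in Theorem~\ref{Az theorem} to choose $|\lm|$ small, forcing $1-h_\infty\leq |\lm|\sum_r m(B_r^c)/\bbr{r}<1$. Your route is more elementary and avoids the logarithm trick; the paper's gives a quantitative estimate valid for every $\lm$.

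One minor point: the parenthetical about ``an interior zero of $h_n$'' is misleading. There is no interior zero; the correct argument for strict monotonicity (as in the paper's Lemma~\ref{lemma ssh}(ii)) is backward induction from $h_n(n)=0<h_n(n-1)$, using that $\Lp h_n(r)=\lm h_n(r)<0$ together with $h_n(r)<h_n(r+1)$ would contradict the equation, hence $h_n(r-1)>h_n(r)$.
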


\begin{remark}
Let us compare the result above to those for recurrence and stochastic completeness.  In \cite[Proposition 6.1]{Hua2}, see also \cite[Theorem 5.9]{Woe}, it is shown that the transcience of a model graph is equivalent to $\sum_r \frac{1}{\bbr{r}} < \infty.$
Furthermore, in \cite[Theorem 5]{KLW}, it is shown that a model graph is stochastically incomplete if and only if $\sum_r \frac{m(B_r)}{\bbr{r}} < \infty. $
Hence, all stochastically incomplete and all transient model graphs are Feller which is not true for general graphs as we discuss later, see Remark \ref{stability remark}.

\end{remark}

As an immediate corollary of Theorem \ref{model theorem} above, we get that model graphs of infinite measure are always Feller.
\begin{corollary}\label{model corollary}
If $G$ is a model graph with $m(X) = \infty$, then $G$ is Feller. 
\end{corollary}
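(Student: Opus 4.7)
The plan is a direct application of Theorem \ref{model theorem}. The argument splits on whether $\sum_r 1/\bbr{r}$ is finite. If it is, then condition (1) of the theorem holds verbatim and $G$ is Feller. Otherwise, I would need to verify condition (2), namely that $\sum_r m(B_r^c)/\bbr{r} = \infty$.

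The essential observation in the second case is that local finiteness forces $B_r$ to be a finite set of vertices for each $r$, so $m(B_r) < \infty$. Combined with the hypothesis $m(X) = \infty$, this gives $m(B_r^c) = m(X) - m(B_r) = \infty$ for every $r$. Since each $\bbr{r}$ is a finite positive sum (again by local finiteness), every term of $\sum_r m(B_r^c)/\bbr{r}$ equals $+\infty$, and the series diverges trivially. Hence condition (2) of Theorem \ref{model theorem} is satisfied and $G$ is Feller.

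There is essentially no obstacle here beyond bookkeeping: the corollary is an immediate deduction from the dichotomy in Theorem \ref{model theorem}, once one notes that $m(X) = \infty$ together with local finiteness automatically forces the tail measures $m(B_r^c)$ to be infinite. If one prefers to avoid series with $+\infty$ terms altogether, one can truncate: for each fixed $R$, the monotonicity $m(B_r^c) \geq m(B_R^c)$ for $r \leq R$ (or more cleanly, choosing $R$ with $m(B_R) \geq M$ for any prescribed $M$ and using $m(B_r^c) \geq m(X) - m(B_R) = \infty$) reduces the divergence of $\sum_r m(B_r^c)/\bbr{r}$ to that of $\sum_r 1/\bbr{r}$, which is the case we are in.
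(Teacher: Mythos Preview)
Your proof is correct and follows exactly the approach the paper intends: the paper simply calls this an ``immediate corollary'' of Theorem \ref{model theorem}, and your case split on the convergence of $\sum_r 1/\bbr{r}$, together with the observation that local finiteness and $m(X)=\infty$ force $m(B_r^c)=\infty$, is precisely that immediate deduction. The third paragraph is unnecessary and somewhat muddled---the main argument already handles everything cleanly---so you could simply drop it.
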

Thus, in the case of model graphs, the main interest is in graphs of finite measure.  For more information on many properties of such graphs see \cite{GHKLW}.  Also, note that the statement of the corollary is not true for general graphs, see Remark \ref{stability remark}.

\bigskip

We start the proof of Theorem \ref{model theorem} by showing the key properties of the minimal, positive solution to the exterior $\lambda$-harmonic boundary problem (\ref{minimal solution}) in the model case when $\om = \{ o \}$.

\begin{lemma}\label{lemma ssh}
Let $G$ be model.  Let $h$ be the minimal solution of (\ref{minimal solution}) with $\om = \{ o \}$ and $\lambda < 0$.  Then
\begin{itemize}
\item[(i)]  $h$ is a spherically symmetric function with respect to $o$.
\item[(ii)] $\h(r+1) < \h(r)$ for all $r$.
\item[(iii)] If $f(r) = \bbr{r} (\h(r) - \h(r+1)),$
then $f$ is a positive, decreasing function.  In particular, $\lim_{r \to \infty} f(r)$ exists.
\item[(iv)] 
If $\sum_r \frac{1}{\bbr{r}} = \infty$, then $\lim_{r \to \infty} f(r) = 0$ and
\[ - \lm \lim_{s \to \infty} h(s) m(B_r^c) \leq f(r) \leq -\lm h(r+1) m(B_r^c). \]
\end{itemize}
\end{lemma}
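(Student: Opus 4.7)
The plan is to work throughout with the approximating sequence from Proposition \ref{fundamental convergence prop} applied to the ball exhaustion $\om_n = B_n$. Write $h_n$ for the resulting Dirichlet solution, so that $\Lp h_n = \lambda h_n$ on $B_{n-1}\setminus\{o\}$, $h_n(o)=1$, $h_n \equiv 0$ on $S_n$, and $h_n \nearrow h$. Since the structural coefficients $\vka_\pm$ are spherically symmetric, $\ow{\Lp}$ sends spherically symmetric functions to spherically symmetric functions; hence the radial recurrence associated to $\ow{\Lp}f=\lambda f$ with prescribed values at $r=0$ and $r=n$ already produces a spherically symmetric solution of the Dirichlet problem. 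Uniqueness of this problem (apply Proposition \ref{e maximum principle} to the difference of two solutions, using $\lambda<0$) identifies this radial function with $h_n$, so every $h_n$, and hence $h$, is spherically symmetric, proving (i).

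For (ii) and (iii), the equation $\ow{\Lp} h(x)=\lambda h(x)$ at $x \in S_r$ with $r \geq 1$ takes the radial form $\vka_+(r)(\h(r)-\h(r+1)) - \vka_-(r)(\h(r-1)-\h(r)) = \lambda\h(r)$; multiplying by $m(S_r)$ and applying the identity (\ref{curvatures}) collapses it to the first-order recursion
\[ f(r)-f(r-1) = \lambda\,\h(r)\,m(S_r). \]
Since $\lambda<0$ and $\h>0$, this already shows $f$ is strictly decreasing. Applied to $h_n$, the same recursion together with $f_n(n-1) = \bbr{n-1}\h_n(n-1) > 0$ (from $h_n(n)=0$ and strict positivity on the interior, Proposition \ref{e maximum principle}) propagates $f_n>0$ down to every $r \leq n-1$, so each $\h_n$ is strictly decreasing on $\{0,\dots,n\}$. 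Passing $n \to \infty$ preserves both $f \geq 0$ and the strict decrease of $f$, so $L := \lim_{r\to\infty} f(r) \geq 0$ exists and $f(r) > L \geq 0$ for every $r$, yielding both (ii) and (iii).

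For (iv), telescoping $\h(r+1)-\h(r) = -f(r)/\bbr{r}$ yields $\h(R) = 1 - \sum_{r=0}^{R-1} f(r)/\bbr{r}$; the positivity of $h$ forces $\sum_r f(r)/\bbr{r} \leq 1$, and since $f(r) \geq L$ for all $r$, the assumed divergence of $\sum_r 1/\bbr{r}$ forces $L=0$. Telescoping the recursion from $r+1$ to $\infty$ then gives $f(r) = -\lambda\sum_{s \geq r+1}\h(s)\,m(S_s)$, and sandwiching $\h(s)$ between $\lim_{s'\to\infty}\h(s')$ and $\h(r+1)$ via the monotonicity from (ii) delivers the stated two-sided bound. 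The step I expect to be most delicate is the strict positivity of $f$ in (iii): passing to the limit from $f_n > 0$ only gives $f \geq 0$, and upgrading this to strictness genuinely requires combining the weak limit inequality with the strict decrease furnished by the recursion, as sketched above.
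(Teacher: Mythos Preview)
Your proof is correct and follows essentially the same approach as the paper: exhaust by balls, establish radial symmetry of the $h_n$ via the maximum principle, derive the recursion $f(r)-f(r-1)=\lambda\h(r)m(S_r)$ by multiplying the radial equation by $m(S_r)$, and then telescope. The only cosmetic differences are that for (i) you invoke uniqueness of the Dirichlet problem to identify $h_n$ with the radial solution whereas the paper averages $h_n$ over spheres first, and for (ii) you deduce strict positivity of $f$ from ``strictly decreasing plus nonnegative limit'' whereas the paper argues directly that $\h(r)=\h(r+1)$ would force $\h(r+2)>\h(r+1)$; both variants are equivalent and equally short.
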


\begin{proof}
Exhaust $G$ by $\om_n = B_n$ and let $h_n$ solve
\[
\left \{
\begin{array}{ll}
\Lp h_n = \lambda h_n & \textup{on } \inte{n}\setminus \{ o \} \\
h_n(o) =1 & \\
h_n = 0 & \textup{on } \bd{n}
\end{array}
\right.
\]
so that $0< h_n<1$ on $\inte{n} \setminus \{ o \}$ and $h_n \to h$ by applying Proposition \ref{e maximum principle}.

The proof of (i) follows by averaging the functions $h_n$ over spheres to obtain a spherically symmetric solution.  That is, let
\[ g_n(r) = \frac{1}{m(S_r)} \sum_{x \in S_r} h_n(x) m(x) \]
for $0 \leq r \leq n$.  A calculation using (\ref{curvatures}) gives that $\Lp g_n(r) = \lm g_n(r)$ and, by a maximum principle such as Proposition \ref{e maximum principle}, it follows that $g_n(r) = h_n(x)$ for all $x \in S_r$ so that $h_n$ is spherically symmetric with respect to $o$.  Taking the limit then proves (i).

To prove (ii), we first claim that $\h_n(r+1) < \h_n(r)$ for $0 \leq r \leq n-1$.
This is clear for $\h_n(n-1) > \h_n(n) =0$.  Then
\begin{align*}
 \Lp \h_n(n-1) &= \vka_+(n-1) \h_n(n-1) + \vka_-(n-1) (\h_n(n-1) - \h_n(n-2)) \\
 & = \lambda \h_n(n-1) < 0
\end{align*}
gives that $\h_n(n-1) < \h_n(n-2)$.  Iterating proves the claim.

Therefore, by taking the limit, we get that
\[ \h(r+1) \leq \h(r) \textup{ for all } r. \]
If there exists an $r$ such that $\h(r+1) = \h(r)$, then
\[ \ow{\Lp} \h(r+1) = \vka_+(r+1) (\h(r+1) - \h(r+2)) = \lambda \h(r+1) < 0 \]
yielding that $\h(r+2) > \h(r+1)$ contradicting the above.  Therefore, $\h(r+1) < \h(r)$.

To prove (iii), first note that, by property (ii), we have that $f(r)>0$.
Multiplying
\[ \ow{\Lp} \h(r) = \vka_+(r) ( \h(r) - \h(r+1)) + \vka_-(r) (\h(r) - \h(r-1)) = \lambda \h(r) \]
by $m(S_r)$ yields
\[ \bbr{r} (\h(r)-\h(r+1)) + \bbr{r-1}(\h(r)-\h(r-1)) = \lambda  \h(r) m(S_r) \]
so that
\begin{equation}\label{recursion}
 f(r)-f(r-1) = \lambda \h(r) m(S_r)<0
\end{equation}
and, therefore, $f$ is decreasing.  

To prove (iv), assume that $\sum_r \frac{1}{\bbr{r}} =\infty$ and $\lim_{r \to \infty} f(r) = \alpha>0$.  Then $f(r)\geq \alpha>0$ for all $r$ which implies that
\[ \h(r) - \h(r+1) \geq \frac{\alpha}{\bbr{r}}. \]
Hence, 
\[ \h(0) - \lim_{s \to \infty} \h(s) = \sum_{r=0}^\infty (\h(r) - \h(r+1)) \geq \sum_{r=0}^\infty \frac{\alpha}{\bbr{r}} \]
contradicting $\sum_r \frac{1}{\bbr{r}} =\infty$.  Therefore, $\lim_{r \to \infty} f(r)=0$ in this case.

Now, summing (\ref{recursion}) gives
\[ \sum_{i=r+1}^\infty (f(i)-f(i-1)) = - f(r) = \lambda \sum_{i=r+1}^\infty \h(i) m(S_i). \]
Using the monotonicity of $h$ then implies that
\[ - \lm \lim_{s \to \infty} h(s) m(B_r^c) \leq f(r) \leq -\lm h(r+1) m(B_r^c). \]

\end{proof}

\begin{remark} \label{uniqueness remark}
Let us point out how stochastic completeness is necessary in Corollary \ref{uniqueness corollary} which states that the minimal positive solution the $\lm$-harmonic exterior elliptic boundary problem is unique.  Namely, by the above, $\h$ is a decreasing function.  On the other hand, if a model graph is stochastically incomplete, then there exists a positive, spherically symmetric, bounded function $v$ which satisfies $\ow{\Lp} v = \lm v$ and $v(o)=1$ \cite[Lemma 5.4]{KLW}.  Furthermore, it can be easily seen by induction that $v$ is increasing \cite[Lemma 4.3]{KLW}.  In particular, $v \not \equiv h$. 
\end{remark}

\begin{proof}[Proof of Theorem \ref{model theorem}]
Recall that $G$ is Feller if and only if $\lim_{s \to \infty}h(s)=0$.

\noindent $\Longrightarrow$:  \quad  Assume that condition (1) holds, that is, $\sum_r \frac{1}{\bbr{r}} < \infty$, and let $\h$ be the minimal solution of (\ref{minimal solution}) with $\om = \{o\}$.  

Let $g(r) = \sum_{i=r}^\infty \frac{1}{\bbr{i}}$
and note that, using (\ref{curvatures}), $\ow{\Lp} g(r) = 0$ for $r>0$.  Letting $v(r) = \frac{g(r)}{g(0)}$ it follows that, for any $\lambda<0$, $v$ satisfies
\[ \ow{\Lp} v(r)   \geq \lambda v(r) \textup{ for } r>0 \textup{ and } v(0)=1. \]
Furthermore, $v(r) \to 0$ as $r \to \infty$.  Therefore, $G$ is Feller by Theorem \ref{subharmonic}.

Assume now that condition (2) holds, that is, $\sum_r \frac{1}{\bbr{r}}=\infty$ and $\sum_r \frac{m(B_r^c)}{\bbr{r}} = \infty$.  
Letting  $f(r) = \bbr{r}(\h(r)-\h(r+1))$
it follows by Lemma \ref{lemma ssh} (iv) above that $f(r) \to 0$ as $r \to \infty$ since $\sum_r \frac{1}{\bbr{r}} = \infty$ and
\begin{equation}{\label{inequality}}
f(r) \geq -\lambda \lim_{s \to \infty} \h(s) m(B_r^c).
\end{equation}

If $m(B_r^c)=\infty$, then $\lim_{s \to \infty} \h(s) = 0$ by (\ref{inequality}) and hence $G$ is Feller.

If $m(B_r^c) < \infty$, then (\ref{inequality}) gives that
\[ \h(r) - \h(r+1) \geq -\lambda \lim_{s \to \infty} \h(s) \frac{m(B_r^c)}{\bbr{r}}. \]
Summing this implies
\begin{align*} 
\sum_{i=r+1}^\infty (\h(i) - \h(i+1)) &= \h(r+1) - \lim_{s \to \infty} \h(s) \geq - \lambda \lim_{s \to \infty} \h(s) \sum_{i=r+1}^\infty \frac{m(B_i^c)}{\bbr{i}} 
\end{align*}
contradicting $\sum_r \frac{m(B_r^c)}{\bbr{r}} = \infty$ if $\lim_{s \to \infty} \h(s) \not = 0$.  Therefore, $\lim_{s \to \infty} \h(s)=0$ and $G$ is Feller.

\bigskip

\noindent $\Longleftarrow$:  \quad  Assume that $G$ is Feller and that $\sum_r \frac{1}{\bbr{r}} = \infty.$
By Lemma \ref{lemma ssh} (iv),
\[ f(r)= \bbr{r}(\h(r) - \h(r+1)) \leq -\lambda \h(r+1)m(B_r^c) \]
so that
\begin{equation}\label{inequality2}
 \frac{\h(r)-\h(r+1)}{\h(r+1)} \leq -\lambda \frac{m(B_r^c)}{\bbr{r}}.
\end{equation}
As
\[ \frac{\h(r)}{\h(r+1)} -1 \geq \ln\left(\frac{\h(r)}{\h(r+1)}\right) = \ln(\h(r)) - \ln(\h(r+1)) \]
and
\[ \sum_{r=0}^\infty ( \ln(\h(r)) - \ln(\h(r+1)) ) = -\lim_{r \to \infty} \ln(\h(r)) = \infty, \]
since $\h(r) \to 0$ as $r \to \infty$,
it follows, by (\ref{inequality2}), that  $\sum_{r} \frac{m(B_r^c)}{\bbr{r}} = \infty.$
\end{proof}

\subsection{Comparison theorems}
We prove here theorems comparing a general graph with a model one which give analogues to a result of Pigola and Setti \cite[Theorem 5.9]{PS}.  The comparison results here are the opposite of what would be expected given similar results concerning heat kernels, bottom of the spectra, and stochastic completeness in the case of graphs found in \cite{KLW}.  

\begin{definition}
Let $G$ be a graph and $\ow{G}$ be a model graph.

We say that $G$ has \emph{stronger curvature growth outside of a finite set} than $\ow{G}$ if there exists a vertex $x_0$ in $G$ such that
\[ \ka_+(x) \geq \vka_+(r) \textup{ and } \ka_-(x) \leq \vka_-(r) \]
for all vertices $x \in S_r(x_0)$ in $G$ and all $r \geq R$ for some $R$.  

We say that $G$ has \emph{weaker curvature growth outside of a finite set} than $\ow{G}$ if $G$ contains a vertex $x_0$ so that the opposite inequalities hold.
\end{definition}

\begin{theorem} \label{comparison_theorem}
Let $G$ be a graph and $\ow{G}$ be a model graph.
\begin{itemize}
\item[(1)] If $G$ has stronger curvature growth outside of a finite set than $\ow{G}$ and $\ow{G}$ is Feller, then $G$ is Feller.
\item[(2)] If $G$ has weaker curvature growth outside of a finite set than $\ow{G}$ and $\ow{G}$ is not Feller, then $G$ is not Feller.
\end{itemize}
\end{theorem}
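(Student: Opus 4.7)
The plan is to invoke the elliptic characterization in Theorem \ref{Az theorem} and compare the minimal positive solutions of the $\lm$-harmonic exterior problem on $G$ and $\ow{G}$ by lifting the model solution along the combinatorial distance. Choose $R$ so that the curvature comparison holds on $S_r(x_0)$ for all $r \geq R$, fix any $\lm<0$, and let $\ow{\h}$ be the minimal positive solution on $\ow{G}$ for the exterior problem with finite set $B_R(o)$; by (the straightforward extension of) Lemma \ref{lemma ssh}, $\ow{\h}$ is spherically symmetric about $o$, strictly decreasing in the radial variable, and satisfies $\ow{\h}(R)=1$. Lift $\ow{\h}$ to a radial function on $G$ via $v(x) = \ow{\h}(d(x,x_0))$.

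The central computation is as follows: for $x \in S_r(x_0)$ with $r>R$, radiality of $v$ and the fact that neighbors of $x$ lie in $S_{r-1}(x_0) \cup S_r(x_0) \cup S_{r+1}(x_0)$ yield
\[ \LF v(x) = \ka_+(x)\bigl(\ow{\h}(r)-\ow{\h}(r+1)\bigr) + \ka_-(x)\bigl(\ow{\h}(r)-\ow{\h}(r-1)\bigr). \]
By Lemma \ref{lemma ssh}(ii) the first parenthesized quantity is strictly positive and the second strictly negative, so under the stronger curvature growth hypothesis of part (1) we obtain $\LF v \geq \lm v$ on $X \setminus B_R(x_0)$, and under the weaker curvature growth hypothesis of part (2) we obtain $\LF v \leq \lm v$ there. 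Moreover $v \equiv 1$ on $\partial B_R(x_0) \subseteq S_R(x_0)$.

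For part (1), Theorem \ref{Az theorem} and the Feller hypothesis on $\ow{G}$ give $\ow{\h}(r)\to 0$, so local finiteness of $G$ yields $v \in C_0(G)$. Theorem \ref{subharmonic} applied with exterior set $B_R(x_0)$ forces $v \geq h$, where $h$ denotes the minimal positive solution on $G$, hence $h \in C_0(G)$, and $G$ is Feller by Theorem \ref{Az theorem}. For part (2), the failure of Feller on $\ow{G}$ combined with Lemma \ref{lemma ssh}(ii) produces $\ow{\h}(r)\searrow \alpha$ for some $\alpha>0$, so $v \geq \alpha>0$ throughout $G$. Since stochastic completeness of $G$ is not among the hypotheses, Theorem \ref{superharmonic} is not directly available; instead, I compare at the level of Dirichlet exhaustions. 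Exhaust $G$ by $\om_n = B_n(x_0)$ and $\ow{G}$ by $B_n(o)$, let $h_n$ and $\ow{\h}_n$ be the Dirichlet solutions from Proposition \ref{fundamental convergence prop} (with inner set $B_R(x_0)$ and $B_R(o)$ respectively), and set $V_n(x) = \ow{\h}_n(d(x,x_0))$. The same radial computation gives $\LF V_n \leq \lm V_n$ on $\inte{n}\setminus B_R(x_0)$, and the Dirichlet boundary data match on both rims: $V_n = h_n = 1$ on $\partial B_R(x_0)$ and $V_n = h_n = 0$ on $\bd{n}$ (since $\ow{\h}_n(n)=0$). A finite-domain maximum principle in the spirit of Proposition \ref{e maximum principle} applied to $w_n = h_n - V_n$ then rules out any negative interior minimum, because such a point would contradict $\LF w_n \geq \lm w_n$ combined with $\lm<0$; hence $h_n \geq V_n$. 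Passing to the limit gives $h(x) \geq \ow{\h}(d(x,x_0)) \geq \alpha$ on $X \setminus B_R(x_0)$, so $h \notin C_0(G)$ and Theorem \ref{Az theorem} concludes that $G$ is not Feller.

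The main obstacle is circumventing stochastic completeness in part (2): Theorem \ref{superharmonic} uses it essentially, as emphasized in Remark \ref{uniqueness remark}, but the finite-domain comparison above needs only the interior maximum principle, with matching Dirichlet data on the outer sphere replacing the boundedness-at-infinity input that stochastic completeness supplies in the unbounded setting.
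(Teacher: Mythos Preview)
Your argument for part (1) is essentially the paper's: lift the minimal model solution radially, use the sign of $\ow{\h}(r)-\ow{\h}(r\pm 1)$ together with the curvature inequalities to get $\LF v \geq \lm v$, and apply Theorem \ref{subharmonic}.

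For part (2) you take a genuinely different route. The paper does \emph{not} bypass stochastic completeness: it observes that a non-Feller model must satisfy $\sum_r 1/\bbr{r}=\infty$, hence $\sum_r m(B_r)/\bbr{r}=\infty$, so $\ow{G}$ is stochastically complete by \cite[Theorem 5]{KLW}; then weaker curvature growth transfers stochastic completeness to $G$ via \cite[Theorem 6]{KLW}, after which Theorem \ref{superharmonic} applies directly to the lifted $v$. Your approach instead compares the Dirichlet approximants $h_n$ and $V_n=\ow{\h}_n\circ d(\cdot,x_0)$ on the finite annulus $\om_n\setminus B_R(x_0)$, where the boundary data match on both rims, and uses only the elementary minimum principle of Proposition \ref{e maximum principle} to obtain $h_n\geq V_n$; the limit $h\geq v\geq\alpha>0$ follows. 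This is correct: the monotonicity of $\ow{\h}_n$ needed for the sign analysis is established in the proof of Lemma \ref{lemma ssh}(ii), and the extension to inner set $B_R(o)$ is indeed routine. The payoff of your argument is that it is entirely self-contained within this paper and does not invoke the external comparison theorem for stochastic completeness from \cite{KLW}; the payoff of the paper's argument is that it records the additional structural fact that $G$ is stochastically complete under the hypotheses of part (2), which your proof does not yield.
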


\begin{proof}
Without loss of generality, we can take $R=0$ in the definition of stronger and weaker curvature growth.  Let $\h$ be the minimal positive solution to the $\lambda$-harmonic exterior boundary problem on $\ow{G}$ and recall, by Lemma \ref{lemma ssh}, that $\h$ is spherically symmetric with respect to $o$ and decreasing.  

Define a function on $G$ by letting
\[ v(x) = \h(r) \textup{ for } x \in S_r(x_0). \]
Then, $v(x_0)=1$, $v>0$ and, using the fact that $G$ has stronger curvature growth than $\ow{G}$ in statement (1),
\begin{align*} 
\ow{\Lp} v(x) &= \ka_+(x) (\h(r) - \h(r+1)) + \ka_-(x) (\h(r) - \h(r-1)) \\
&\geq \vka_+(r) (\h(r) - \h(r+1)) + \vka_-(r)(\h(r) - \h(r-1)) = \lm \h(r) = \lm v(x) 
\end{align*}
so that $\ow{\Lp} v(x) \geq \lm v(x)$ on $X \setminus \{x_0\}$.  Since $\ow{G}$ is Feller, it follows that $\h(r) \to 0$ as $r \to \infty$ so that $v \in C_0(X)$ and $G$ is Feller by Theorem \ref{subharmonic}.  This proves (1).

For the proof of (2), note that the assumption that $\ow{G}$ is not Feller implies, by Theorem \ref{model theorem} and \cite[Theorem 5]{KLW}, that $\ow{G}$ is stochastically complete.  The fact that $G$ has weaker curvature growth than $\ow{G}$ now implies that $G$ is stochastically complete as well \cite[Theorem 6]{KLW}. 
Now, the proof of (2) is similar to that above using Theorem \ref{superharmonic}. 
\end{proof}

\subsection{Examples and stability} \label{sub_ex_stab}
In this subsection we give several examples to illustrate the results above.  We also briefly discuss the stability of the Feller property.

\begin{example}\label{model example}
We start with an example to illustrate Theorem \ref{general_thm}, Theorem \ref{model theorem} and Corollary \ref{growth_corollary}.  In particular, we show that the curvature must be bounded with respect to every base point $x_0$ in order to apply Theorem \ref{general_thm}. 

Let $X = \N_0$ with $b(x,y)=1$ if and only if $|x-y|=1$ and 0 otherwise.  Let 
\[ m(r)=\frac{1}{(r+1)^{2+\ep}} \textup{ for } \ep>0. \]  
Then $G$ is model with  $\bbr{r}=1$ so that $\sum_r \frac{1}{\bbr{r}} = \infty$ and $\sum_r \frac{m(B_r^c)}{\bbr{r}}<\infty$.  Hence, by Theorem \ref{model theorem}, $G$ is not Feller.

Letting $x_0 =o = 0$, it follows that $\ow{\ka}_+(r)= (r+1)^{2+\ep}= \ow{\ka}_-(r)$, so that $\ow{\ka}_+(r) - \ow{\ka}_-(r) = 0$ and
\[ \ow{\ka}_+(r) - \ow{\ka}_-(r) \left( \frac{r+1}{r-1} \right) = \frac{-2(r+1)^{2+\ep}}{r-1} \]
so that Corollary \ref{growth_corollary} does not apply.

Finally, letting the basepoint $x_0$ for $\rho(x) = d(x,x_0)$ vary, say $x_0=n$, it follows that 
\[ \ka_+(n) =2(n+1)^{2+\ep} \]
while $\ka_-(n)=0$ and $\ka_+(x) - \ka_-(x) \leq 0$ for all other vertices, so that Theorem \ref{general_thm} does not apply.
\end{example}

\begin{example}\label{anti_example}
We also give an example to show that no assumption of the form $\ka_+(x) - \ka_-(x) \geq K$ for all $x$ and all $x_0$ implies the Feller property.  In fact, we give examples of graphs with arbitrarily large curvature growth in all directions that are not Feller.  This is somewhat surprising in light of Theorem \ref{comparison_theorem} above which states that any graph with larger curvature growth than a Feller model graph is Feller and in light of the manifold case for which all Cartan-Hadamard manifolds are Feller.

The example is a tree where every vertex has three forward neighbors.  We arrange the vertices in terms of generations so that it will be model as follows: the first generation consists of one vertex, call it $x_{0,1,1}$.  The next generation consists of three neighbors of $x_{0,1,1}$, call them $x_{1,1,1}, x_{1,1,2}$ and $x_{1,1,3}$.  In turn, each of these will have three forward neighbors and, in general, the $r^{\tiny{\textup{th}}}$ generation will have $3^r$ vertices, labeled $x_{r,i,j}$ where $i=1,2, \ldots, 3^{r-1}$ indicates which member of the previous generation that the vertex is connected to and $j=1,2,3$.  Now, we specify the edge weights by letting, $b$ be symmetric with 
\[ b(x_{r,i,j}, x_{r+1,k,l}) = \frac{2(r+1)}{3^{r+1}} \textup{ if and only if } k=j \]
and 0 otherwise.  Therefore, with $o = x_{0,1,1}$, the graph is model with $\bbr{r} = 2(r+1)$ so that $\sum_r \frac{1}{\bbr{r}} = \infty$.

Now, choose the measure so that it is spherically symmetric with respect to $o$ and so that $\sum_r \frac{m(B_r^c)}{\bbr{r}} < \infty$
for example, let $m(r) = ({3^r(r+1)^{1+\ep}})^{-1}$.  Thus, by Theorem \ref{model theorem}, this graph is not Feller. 

On the other hand, 
\[ \ow{\ka}_+(r)- \ow{\ka}_-(r) = \frac{2}{m(r)3^r} \]
for $r>0$ so that, by choosing $m(r)$ appropriately, the graph can have arbitrarily large curvature growth.  Likewise, it is not difficult to see that choosing the basepoint to be in any generation of the tree also produces curvatures which are always positive and can be made arbitrarily large by the choice of $m$.  Thus, curvature growth of any magnitude does not ensure that a graph will be Feller.

Also, note that
\[ \ow{\ka}_+(r)- \ow{\ka}_-(r)\left(\frac{r+1}{r-1} \right) = \frac{-2}{m(r)3^r}\left(\frac{r+1}{r-1}\right) \]
which allows us to apply Corollaries \ref{growth_corollary} and \ref{decay_corollary} to this example.

\end{example}

\begin{remark} \label{stability remark}
We end this subsection with a remark concerning the stability of the Feller property.  Namely, since the elliptic characterization of the Feller property, Theorem \ref{Az theorem}, takes place outside of a finite set, it follows that ``gluing'' a graph which is Feller with one that is not Feller together at a single vertex produces a non-Feller graph.  On the other hand, it is known that such an operation does not affect either transcience \cite{Woe2} or stochastic incompleteness \cite{Hua, KL}.  Therefore, as has already been mentioned, although stochastic incompleteness and transcience do imply the Feller property in the model case, this is certainly not true for general graphs.  Furthermore, ``gluing'' a model graph of infinite measure, which is Feller by Corollary \ref{model corollary}, with one of finite measure which is not Feller, will produce a graph of infinite measure which is not Feller.  Thus, Corollary \ref{model corollary} does not hold for general graphs as well.

Finally, let us point how a non-Feller graph can become Feller by ``gluing'' infinitely many vertices.  Start with the non-Feller graph in Example \ref{model example} with $\ep=1$.  That is, let $X = \N_0$ with $b(x,y)=1$ if and only if $|x-y|=1$ and 0 otherwise and let  $m(r)=(r+1)^{-3}.$  Now, to every vertex $r>1$ in this graph, attach a single additional vertex, call it $x_r$ and extend $b$ so that it is symmetric and $b(r,x_r)>0$.  It then follows that, with respect to the base point $x_0=0$, for $r>1$
\[ \ka_+(r) - \ka_-(r) \left(\frac{r+1}{r-1}\right) = (r+1)^{3} \left( \frac{-2 +b(r,x_r)(r-1)}{r-1} \right) \]
and, since $\ka_+(x_r)=0$ and $d(x_r,0) = r+1$,
\[ \ka_+(x_r) - \ka_-(x_r)\left( \frac{r+2}{r} \right) = -\frac{b(r,x_r) (r+2)}{m(x_r)r} .\]

Therefore, if we let
$ b(r,x_r) = \frac{c}{r-1}$
with $c \geq 2$ and  $m(x_r)\geq(r (r-1))^{-1}$, it then follows that the resulting graph is Feller by Corollary \ref{growth_corollary}.  On the other hand, letting $0 < c<2$ and $m(x_r)\leq (r(r-1)(r+2))^{-1}$ gives that the resulting graph is non-Feller by Corollary \ref{decay_corollary}.  Note that in order to apply Corollary \ref{decay_corollary} we need to check that the resulting graph is stochastically complete.  However, since the original graph is stochastically complete, the stochastic completeness of the resulting graph follows easily by an argument such as given in the proof of Theorem 4.4 in \cite{Woj11}.
\end{remark}

\begin{bibdiv}
\begin{biblist}

\bib{Az}{article}{
   author={Azencott, Robert},
   title={Behavior of diffusion semi-groups at infinity},
   journal={Bull. Soc. Math. France},
   volume={102},
   date={1974},
   pages={193--240},
   issn={0037-9484},
   review={\MR{0356254 (50 \#8725)}},
}

\bib{Da}{article}{
   author={Davies, E. B.},
   title={Heat kernel bounds, conservation of probability and the Feller
   property},
   note={Festschrift on the occasion of the 70th birthday of Shmuel Agmon},
   journal={J. Anal. Math.},
   volume={58},
   date={1992},
   pages={99--119},
   issn={0021-7670},
   review={\MR{1226938 (94e:58136)}},
   doi={10.1007/BF02790359},
}

\bib{D}{article}{
   author={Dodziuk, J{\'o}zef},
   title={Maximum principle for parabolic inequalities and the heat flow on
   open manifolds},
   journal={Indiana Univ. Math. J.},
   volume={32},
   date={1983},
   number={5},
   pages={703--716},
   issn={0022-2518},
   review={\MR{711862 (85e:58140)}},
   doi={10.1512/iumj.1983.32.32046},
}

\bib{D2}{article}{
   author={Dodziuk, J{\'o}zef},
   title={Elliptic operators on infinite graphs},
   conference={
      title={Analysis, geometry and topology of elliptic operators},
   },
   book={
      publisher={World Sci. Publ., Hackensack, NJ},
   },
   date={2006},
   pages={353--368},
   review={\MR{2246774 (2008f:58019)}},
}

\bib{DM}{article}{
   author={Dodziuk, J{\'o}zef},
   author={Mathai, Varghese},
   title={Kato's inequality and asymptotic spectral properties for discrete
   magnetic Laplacians},
   conference={
      title={The ubiquitous heat kernel},
   },
   book={
      series={Contemp. Math.},
      volume={398},
      publisher={Amer. Math. Soc.},
      place={Providence, RI},
   },
   date={2006},
   pages={69--81},
   review={\MR{2218014 (2007c:81054)}},
}

\bib{Fel1}{article}{
   author={Feller, William},
   title={The parabolic differential equations and the associated
   semi-groups of transformations},
   journal={Ann. of Math. (2)},
   volume={55},
   date={1952},
   pages={468--519},
   issn={0003-486X},
   review={\MR{0047886 (13,948a)}},
}

\bib{Fel2}{article}{
   author={Feller, William},
   title={The general diffusion operator and positivity preserving
   semi-groups in one dimension},
   journal={Ann. of Math. (2)},
   volume={60},
   date={1954},
   pages={417--436},
   issn={0003-486X},
   review={\MR{0065809 (16,488d)}},
}

\bib{Fol}{article}{
   author={Folz, Matthew},
   title={Volume growth and stochastic completeness of graphs},
   journal={Trans. Amer. Math. Soc.},
   volume={366},
   date={2014},
   number={4},
   pages={2089--2119},
   issn={0002-9947},
   review={\MR{3152724}},
   doi={10.1090/S0002-9947-2013-05930-2},
}

\bib{GHKLW}{article}{
   author={Georgakopoulos, Agelos},
   author={Haeseler, Sebastian},
   author={Keller, Matthias},
   author={Lenz, Daniel},
   author={Wojciechowski, Rados{\l}aw K.},
   title={Graphs of finite measure},
   language={English, with English and French summaries},
   journal={J. Math. Pures Appl. (9)},
   volume={103},
   date={2015},
   number={5},
   pages={1093--1131},
   issn={0021-7824},
   review={\MR{3333051}},
   doi={10.1016/j.matpur.2014.10.006},
}

\bib{Gri}{article}{
   author={Grigor{\cprime}yan, Alexander},
   title={Analytic and geometric background of recurrence and non-explosion
   of the Brownian motion on Riemannian manifolds},
   journal={Bull. Amer. Math. Soc. (N.S.)},
   volume={36},
   date={1999},
   number={2},
   pages={135--249},
   issn={0273-0979},
   review={\MR{1659871 (99k:58195)}},
   doi={10.1090/S0273-0979-99-00776-4},
}

\bib{GHM}{article}{
   author={Grigor'yan, Alexander},
   author={Huang, Xueping},
   author={Masamune, Jun},
   title={On stochastic completeness of jump processes},
   journal={Math. Z.},
   volume={271},
   date={2012},
   number={3-4},
   pages={1211--1239},
   issn={0025-5874},
   review={\MR{2945605}},
   doi={10.1007/s00209-011-0911-x},
}

\bib{HKLW}{article}{
   author={Haeseler, Sebastian},
   author={Keller, Matthias},
   author={Lenz, Daniel},
   author={Wojciechowski, Rados{\l}aw},
   title={Laplacians on infinite graphs: Dirichlet and Neumann boundary
   conditions},
   journal={J. Spectr. Theory},
   volume={2},
   date={2012},
   number={4},
   pages={397--432},
   issn={1664-039X},
   review={\MR{2947294}},
}

\bib{Hsu1}{article}{
   author={Hsu, Pei},
   title={Heat semigroup on a complete Riemannian manifold},
   journal={Ann. Probab.},
   volume={17},
   date={1989},
   number={3},
   pages={1248--1254},
   issn={0091-1798},
   review={\MR{1009455 (90j:58158)}},
}

\bib{Hua}{article}{
   author={Huang, Xueping},
   title={Stochastic incompleteness for graphs and weak Omori-Yau maximum
   principle},
   journal={J. Math. Anal. Appl.},
   volume={379},
   date={2011},
   number={2},
   pages={764--782},
   issn={0022-247X},
   review={\MR{2784357 (2012c:60194)}},
   doi={10.1016/j.jmaa.2011.02.009},
}

\bib{Hua4}{book}{
   author={Huang, Xueping},
   title={On stochastic completeness of weighted graphs},
   note={Thesis (Ph.D.)--Bielefeld University},
   date={2011},
   pages={115},
}

\bib{Hua2}{article}{
   author={Huang, Xueping},
   title={Escape rate of Markov chains on infinite graphs},
   journal={J. Theoret. Probab.},
   volume={27},
   date={2014},
   number={2},
   pages={634--682},
   issn={0894-9840},
   review={\MR{3195830}},
   doi={10.1007/s10959-012-0456-x},
}

\bib{Hua3}{article}{
   author={Huang, Xueping},
   title={A note on the volume growth criterion for stochastic completeness
   of weighted graphs},
   journal={Potential Anal.},
   volume={40},
   date={2014},
   number={2},
   pages={117--142},
   issn={0926-2601},
   review={\MR{3152158}},
   doi={10.1007/s11118-013-9342-0},
}

\bib{HKMW}{article}{
   author={Huang, Xueping},
   author={Keller, Matthias},
   author={Masamune, Jun},
   author={Wojciechowski, Rados{\l}aw K.},
   title={A note on self-adjoint extensions of the Laplacian on weighted
   graphs},
   journal={J. Funct. Anal.},
   volume={265},
   date={2013},
   number={8},
   pages={1556--1578},
   issn={0022-1236},
   review={\MR{3079229}},
   doi={10.1016/j.jfa.2013.06.004},
}

\bib{KLi}{article}{
   author={Karp, Leon},
   author={Li, Peter},
   title={The heat equation on complete Riemannian manifolds}
   eprint={math.uci.edu/~pli/heat.pdf}
   }

\bib{KL2}{article}{
   author={Keller, M.},
   author={Lenz, D.},
   title={Unbounded Laplacians on graphs: basic spectral properties and the
   heat equation},
   journal={Math. Model. Nat. Phenom.},
   volume={5},
   date={2010},
   number={4},
   pages={198--224},
   issn={0973-5348},
   review={\MR{2662456}},
   doi={10.1051/mmnp/20105409},
}

\bib{KL}{article}{
   author={Keller, Matthias},
   author={Lenz, Daniel},
   title={Dirichlet forms and stochastic completeness of graphs and
   subgraphs},
   journal={J. Reine Angew. Math.},
   volume={666},
   date={2012},
   pages={189--223},
   issn={0075-4102},
   review={\MR{2920886}},
   doi={10.1515/CRELLE.2011.122},
}

\bib{KLSW}{article}{
   author={Keller, Matthias},
   author={Lenz, Daniel},
   author={Schmidt, Marcel}
   author={Wojciechowski, Rados{\l}aw K.},
   title={Note on uniformly transient graphs},
   journal={Rev. Mat. Iberoam.}
   date={to appear}
}

\bib{KLW}{article}{
   author={Keller, Matthias},
   author={Lenz, Daniel},
   author={Wojciechowski, Rados{\l}aw K.},
   title={Volume growth, spectrum and stochastic completeness of infinite
   graphs},
   journal={Math. Z.},
   volume={274},
   date={2013},
   number={3-4},
   pages={905--932},
   issn={0025-5874},
   review={\MR{3078252}},
   doi={10.1007/s00209-012-1101-1},
}

\bib{Kh}{article}{
   author={Khas{\cprime}minski{\u\i}, R. Z.},
   title={Ergodic properties of recurrent diffusion processes and
   stabilization of the solution of the Cauchy problem for parabolic
   equations},
   language={Russian, with English summary},
   journal={Teor. Verojatnost. i Primenen.},
   volume={5},
   date={1960},
   pages={196--214},
   issn={0040-361x},
   review={\MR{0133871 (24 \#A3695)}},
}

\bib{PS}{article}{
   author={Pigola, Stefano},
   author={Setti, Alberto G.},
   title={The Feller property on Riemannian manifolds},
   journal={J. Funct. Anal.},
   volume={262},
   date={2012},
   number={5},
   pages={2481--2515},
   issn={0022-1236},
   review={\MR{2876412}},
   doi={10.1016/j.jfa.2011.12.001},
}

\bib{We}{article}{
   author={Weber, Andreas},
   title={Analysis of the physical Laplacian and the heat flow on a locally
   finite graph},
   journal={J. Math. Anal. Appl.},
   volume={370},
   date={2010},
   number={1},
   pages={146--158},
   issn={0022-247X},
   review={\MR{2651136}},
   doi={10.1016/j.jmaa.2010.04.044},
}

\bib{Woe2}{book}{
   author={Woess, Wolfgang},
   title={Random walks on infinite graphs and groups},
   series={Cambridge Tracts in Mathematics},
   volume={138},
   publisher={Cambridge University Press},
   place={Cambridge},
   date={2000},
   pages={xii+334},
   isbn={0-521-55292-3},
   review={\MR{1743100 (2001k:60006)}},
   doi={10.1017/CBO9780511470967},
}

\bib{Woe}{book}{
   author={Woess, Wolfgang},
   title={Denumerable Markov chains},
   series={EMS Textbooks in Mathematics},
   note={Generating functions, boundary theory, random walks on trees},
   publisher={European Mathematical Society (EMS), Z\"urich},
   date={2009},
   pages={xviii+351},
   isbn={978-3-03719-071-5},
   review={\MR{2548569 (2011f:60142)}},
   doi={10.4171/071},
}

\bib{W08}{book}{
   author={Wojciechowski, Rados{\l}aw Krzysztof},
   title={Stochastic completeness of graphs},
   note={Thesis (Ph.D.)--City University of New York},
   publisher={ProQuest LLC, Ann Arbor, MI},
   date={2008},
   pages={87},
   isbn={978-0549-58579-4},
   review={\MR{2711706}},
}

\bib{W09}{article}{
   author={Wojciechowski, Rados{\l}aw K.},
   title={Heat kernel and essential spectrum of infinite graphs},
   journal={Indiana Univ. Math. J.},
   volume={58},
   date={2009},
   number={3},
   pages={1419--1441},
   issn={0022-2518},
   review={\MR{2542093 (2010k:35208)}},
   doi={10.1512/iumj.2009.58.3575},
}

\bib{Woj11}{article}{
   author={Wojciechowski, Rados{\l}aw Krzysztof},
   title={Stochastically incomplete manifolds and graphs},
   conference={
      title={Random walks, boundaries and spectra},
   },
   book={
      series={Progr. Probab.},
      volume={64},
      publisher={Birkh\"auser/Springer Basel AG, Basel},
   },
   date={2011},
   pages={163--179},
   review={\MR{3051698}},
}

\bib{Y}{article}{
   author={Yau, Shing Tung},
   title={On the heat kernel of a complete Riemannian manifold},
   journal={J. Math. Pures Appl. (9)},
   volume={57},
   date={1978},
   number={2},
   pages={191--201},
   issn={0021-7824},
   review={\MR{505904 (81b:58041)}},
}

\end{biblist}
\end{bibdiv}

\end{document}